\newtheorem{theorem}{Theorem}[section]
\newtheorem{lemma}[theorem]{Lemma}
\newtheorem{corollary}[theorem]{Corollary}
\theoremstyle{definition}
\newtheorem{example}[theorem]{Example}
\newtheorem{proposition}[theorem]{Proposition}
\theoremstyle{remark}
\newtheorem{remark}[theorem]{Remark}
\def \D{\mathbb{D}}
\def \R{\mathbb{R}}
\def \C{\mathbb{C}}
\def \T{\mathbb{T}}
\newcommand{\clb}{\mathcal{B}}
\newcommand{\clp}{\mathcal{P}}
\newcommand{\clr}{\mathcal{R}}
\newcommand{\cle}{\mathcal{E}}
\newcommand{\cln}{\mathcal{N}}
\newcommand{\clk}{\mathcal{K}}
\newcommand{\cld}{\mathcal{D}}
\newcommand{\na}{\mathcal{NA}}
\newcommand{\vp}{\varphi}
\newcommand\restr[2]{\ensuremath{\left.#1\right|_{#2}}}
\numberwithin{equation}{section}
\subjclass[2020]{47B35, 47B20, 47A30, 30H10.}
\keywords{Dual truncated Toeplitz operator; norm attaining operator; Hardy space; model space; Toeplitz operator; Hankel operator; extremal vector.}
\begin{document}
\title[Norm attaining dual truncated Toeplitz operators]{Norm attaining dual truncated Toeplitz operators}
\author{Sudip Ranjan Bhuia}
\address{Shiv Nadar IoE, NH91, Tehsil Dadri, Greater Noida, Gautam Buddha Nagar, Uttar Pradesh-201314, India}
\email{sudipranjanb@gmail.com; sudip.bhuia@snu.edu.in}

\author{Puspendu Nag}
\address{Department of Mathematics, IIT Hyderabad, 
Kandi, Sangareddy, Telangana, India, 502\,284}
\email{ma23resch11003@iith.ac.in; puspomath@gmail.com}

\today

\begin{abstract}
This paper develops a complete framework for understanding when a dual truncated Toeplitz operator (DTTO) attains its norm.  
Given a nonconstant inner function $u$, the DTTO associated with a symbol $\vp\in L^\infty(\T)$ acts on the orthogonal complement $\clk_u^\perp = uH^2 \oplus H^2_{-}$ of the model space $\clk_u = H^2\ominus uH^2$.  
Assuming $\|\vp\|_{\infty}=1$, we give a characterization of the norm attaining property of $D_\vp$ and describe all extremal vectors.

A sharp analytic–coanalytic dichotomy emerges:  
$D_\vp$ attains its norm precisely when the symbol admits either  
\[
\vp=\overline{u}\;\overline{\psi}_{+}\chi_{+}
\qquad\text{or}\qquad 
\vp=\,u\,\psi_{-} \overline{\chi}_{-},
\]
where $\psi_{\pm},\chi_{\pm}$ are inner functions.  
The first condition corresponds to norm attainment on the analytic component $uH^2$, while the second corresponds to norm attainment on the coanalytic component $H^2_{-}$ via the natural conjugation $C_u$.  

A key feature of the theory is that the dual compressed shift $D_u$ (the case $\vp(z)=z$) always attains its norm.  
We also obtain a coupled Toeplitz–Hankel system governing analytic and coanalytic components of extremal vectors, and provide several concrete examples including non-analytic unimodular symbols illustrating how the factorization criteria govern norm attainment.

\end{abstract}

	\maketitle	
	
\tableofcontents

 \section{Introduction}

The study of Toeplitz and Hankel operators on Hardy spaces is a central theme in operator theory and complex function theory.  Classical foundational works such as Duren \cite{Duren}, Garnett \cite{Garnett}, Bercovici \cite{Bercovivi}, and Peller \cite{Peller:Book} established a deep interplay between bounded analytic functions, invariant subspaces, and operator-theoretic factorization phenomena.  In this framework, Sarason’s introduction of truncated Toeplitz operators (TTOs) \cite{Sarason2007} marked a significant development: for a nonconstant inner function $u$, the operator acts on the model space
\[
\clk_u = H^2 \ominus uH^2,
\]
revealing new algebraic, geometric, and spectral structures (see also \cite{Garcia2016}).  Subsequent works explored compactness, commutation, invariant subspaces, and structural decompositions for TTOs and related multiplication compressions; see, for example, Gorkin–Zheng \cite{Gorkin:Zheng} and C\^amara–Ptak–Kl\'is–\L anucha \cite{Camara2020}.

A parallel but comparatively newer direction concerns the ``dual’’ version of truncated Toeplitz operators.  
Ding and Sang \cite{Sang:Ding} introduced dual truncated Toeplitz operators (DTTOs), acting on the orthogonal complement
\[
\clk_u^\perp = uH^2 \oplus H^2_{-},
\]
and established their basic structural properties.  
Their work demonstrated that DTTOs admit a representation as a $2\times2$ Toeplitz–Hankel operator matrix, a viewpoint refined further by Sang, Qin, and Ding \cite{Sang:Qin:Ding}, who proved Brown–Halmos type theorems and factorization results for DTTOs.  
Subsequent contributions deepened the structural theory:  
Li, Sang, and Ding \cite{Li:Sang:Ding} described the commutant and invariant subspaces of DTTOs;  
Gu \cite{Gu} obtained characterizations and structural criteria;  
and Wang, Zhao, and Zheng \cite{Wang:Zhao:Zheng} analyzed essential commutation and related spectral questions.  
The dual of the compressed shift, studied by C\^amara and Ross \cite{Camara2021}, further highlighted the natural role of DTTOs in harmonic analysis and operator models.

The norm attaining property of bounded linear operators is deeply connected to several fundamental structures in functional analysis and Banach space geometry.  One prominent link is with the Radon--Nikodym property, introduced into operator theory by Choi \cite{CHOI}, which provides powerful duality tools for detecting norm attainment.  Another important connection is with \emph{reflexivity}: in many classical settings, reflexive Banach spaces ensure weak compactness of the unit ball, which plays a decisive role in guaranteeing that operators attain their norm.

The study of norm attaining operators has developed along multiple active directions.  A quantitative refinement is given by the Bishop--Phelps--Bollob\'as theorem and its various extensions \cite{ACOSTA,AL,CASCALES}, which not only assert density of norm attaining functionals or operators, but also provide \emph{explicit stability estimates} for approximating almost norm attaining operators by genuinely norm attaining
ones.

From a structural viewpoint, norm attainment has been used to analyze invariant subspaces of non-normal operators
\cite{IST,LEE}, where classical spectral methods fail to apply directly. Moreover, norm attaining techniques are useful in spectral analysis \cite{KOV1,KOV2}. In fact, we have the class of norm attaining operators is dense in $\clb(H)$ (see \cite{Enflo, Lindenstrauss1963}).

Toeplitz operators that attain their norm were first systematically studied in the foundational work of Brown and Douglas \cite{Brown:Douglas}. Their celebrated rigidity theorem shows that a nonzero Toeplitz operator on the Hardy space over unit disc is a partial isometry if and only if it is of the form $T_u$ or $T_u^*$ for an inner function $u$, highlighting how norm attainment interacts with inner function structure.

Yoshino \cite{Yoshino2002} later gave a complete characterization of norm attaining symbols for Toeplitz and Hankel operators.  His results establish that these operators attain their norm precisely when the symbol admits an analytic and coanalytic factorization.

In classical Toeplitz operator (TO) theory on $H^2$, the norm attainment of $T_\vp$ with $\|\vp\|_\infty=1$
forces global unimodularity of the symbol $\vp$. In contrast, for a dual truncated Toeplitz operator
\[
D_\vp = (I-P_{\clk_u})M_\vp|_{\clk_u^\perp},
\]
extremal vectors in $\clk_u^\perp$ may vanish on sets of positive measure, so norm attainment alone can not guarantee $|\vp|=1$ a.e. on $\T$ without a separate
non–vanishing argument. The examples given later highlight this difference and motivate our standing assumption that the symbol $\vp$ is unimodular when studying norm–attaining DTTOs and their relation with TOs.

\textbf{Analytic case.}
We prove that $D_\vp$ attains its norm on $uH^2$ if and only if there exist inner functions $\psi_+$ and $\chi_+$ such that
\[
\vp=\bar{u}\,\bar{\psi}_+ \chi_+.
\]
Moreover, the full extremal set is explicitly described by a Beurling-type form
\[
\cle^{(+)}_\vp = u\psi_+ u_1 H^2,
\]
where $u_1$ is an inner divisor of $u$.

\textbf{Coanalytic case.}
Using the natural conjugation $C_u$ on $L^2$ (which exchanges $uH^2$ and $H^2_{-}$), we obtain the coanalytic analogue:  
$D_\vp$ attains its norm on $H^2_{-}$ if and only if there exist inner functions $\psi_-$ and $\chi_-$ such that
\[
\vp=\,u\,\psi_- \overline{\chi}_- .
\]
The corresponding extremal set is
\[
\cle^{(-)}_\vp = \bar{\psi}_{-}\bar{u}_1 H^2_{-},
\]
where $u_1$ is an inner divisor of $u$.

\noindent\textbf{Structure of the paper.}  
Section~2 collects preliminaries on Hardy spaces, Toeplitz and Hankel operators, and the block model for DTTOs.  
Section~3 develops the core extremal vector analysis and establishes the analytic and coanalytic factorization theorems for norm attaining DTTOs. Subsequently, the section contains some illustrative examples to demonstrate these results. Section~4 relates the norm attaining property of dual truncated Toeplitz operators to that of classical Toeplitz operators via factorization of the inducing symbol.

\section{Preliminaries} 

We first fix some standard notation and recall basic facts about Hardy spaces, model spaces, and (truncated) Toeplitz-type operators; see \cite{Garcia2016} for a general reference. Let $\D$ be the open unit disk and $\T$ be the unit circle in the complex plane. Let $L^2 := L^2(\T)$ denotes the Hilbert space of square-integrable complex-valued measurable functions on $\T$ with respect to normalized arc-length measure $m(d\theta) = \frac{d\theta}{2\pi}$.

\subsection{Hardy space}

The Hardy space $H^2:=H^2(\T)$ is the norm-closed subspace of $L^2$ consisting of functions whose Fourier series contain only non-negative frequencies. More precisely, every $f\in L^2$ admits a Fourier expansion

\[
f(e^{it}) = \sum_{n\in\mathbb{Z}} c_n e^{int},
\]

where the Fourier coefficients are given by

\[
c_n = \frac{1}{2\pi} \int_{0}^{2\pi} f(e^{it})\, e^{-int}\, dt.
\]

Then

\[
f \in H^2 \quad \text{if and only if} \quad c_n = 0 \text{ for all } n<0.
\]

Since $H^2$ is a closed subspace of the Hilbert space $L^2$,
we have the orthogonal decomposition

\[
L^2 = H^2 \oplus H^2_{-},
\]

where the orthogonal complement of $H^2$ is

\[
H^2_{-} := L^2 \ominus H^2.
\]

The space $H^2_{-}$ can be identified explicitly as

\[
H^2_{-} = \overline{\operatorname{span}}\{e^{-int} : n\ge 1\},
\]

that is, the closed linear span of the negative-frequency exponentials.

We denote by $L^\infty :=L^\infty(\T)$, the Banach space of essentially bounded $m$-measurable functions on $\T$. The space $H^\infty$ is defined as 
$$H^\infty := H^2 \cap L^\infty.$$

For any $\vp\in L^\infty$, the multiplication operator $M_{\vp}:L^2\to L^2$ is defined by
\[
M_{\vp} f = \vp f,\qquad f\in L^2.
\]
Its compression to $H^2$ yields the \emph{Toeplitz operator} (TO) $T_{\vp}:H^2\to H^2$,
\[
T_{\vp} f = P_{+}(\vp f), \qquad f\in H^2.
\]
Likewise, the \emph{Hankel operator} $H_{\vp}:H^2\to H^2_{-}$ is defined by
\[
H_{\vp} f = (I-P_{+})(\vp f), \qquad f\in H^2.
\]
We will frequently use the well-known identity
\[
T_{\vp}^* = T_{\bar{\vp}}.
\]

The \emph{dual Toeplitz operator} $S_\vp$ on $(H^2)^\perp = H^2_{-} = \overline{zH^2}$ is defined by
\[
S_\vp f = (I-P_{+})(\vp f),\qquad f\in H^2_{-}.
\]

\subsection{Model spaces and truncated Toeplitz operators}

Let $u$ be a fixed nonconstant inner function, that is, $u\in H^\infty$ with $|u(e^{it})|=1$ a.e.\ on $\T$. The associated \emph{model space} is
\[
\clk_u := H^2 \ominus uH^2 = H^2\cap(uH^2)^\perp.
\]
This is a closed subspace of $H^2$ of finite codimension when $u$ is a finite Blaschke product, but of infinite codimension in general (for example, when $u$ is an infinite Blaschke product or a singular inner function).

Let $P_{\clk_u}$ denote the orthogonal projection of $L^2$ onto $\clk_u$. Note that $P_{\clk_u}=P_{+}-P_{uH^2}$, where $P_{uH^2}=M_u P_{+} M_{\overline{u}}$. For $\vp\in L^\infty$, the \emph{truncated Toeplitz operator} (TTO) on $\clk_u$ is defined by
$$
A_\vp := P_{\clk_u} \restr{M_\vp}{\clk_u}.
$$
When $\vp(z)=z$, the corresponding operator is called \emph{compressed shift} and is denoted by $S_u$.

For a nonconstant inner function $u$, the orthogonal complement of $\clk_u$ in $L^2$ is given by
$$
\clk_u^\perp = uH^2 \oplus H^2_{-}.
$$
Thus any $h\in \clk_u^\perp$ can be written as $h = uf + \overline{zg}$ for some $f,g\in H^2$. In particular, we have
$$
\clk_u\oplus \clk_u^\perp = L^2
= H^2\oplus H^2_{-}
= \clk_u\oplus uH^2\oplus H^2_{-}.
$$
The orthogonal projection of $L^2$ onto $\clk_u^\perp$ can be written as
$$
P_{\clk_u^\perp} := I - P_{\clk_u} = I - P_{+} + P_{uH^2}
= I - P_{+} + T_u T_u^*.
$$

\subsection{Dual truncated Toeplitz and truncated Hankel operators}

The \emph{dual truncated Toeplitz operator} on $\clk_u^\perp$ with symbol $\vp\in L^\infty$ is defined by
\[D_\vp := (I-P_{\clk_u})\restr{M_\vp}{\clk_u^\perp}.\]
Using the identity
\[P_{\clk_u} = P_{+} - M_u P_{+} M_{\bar{u}},\]
we obtain the concrete formula
\begin{equation}\label{eq:Dtphi-explicit}
	D_\vp(h) = (I-P_{+})(\vp h) + u\,P_{+}(\bar{u}\vp h),\qquad h\in \clk_u^\perp.
\end{equation}
In particular, $(I-P_{+})(\vp h)\in H^2_{-}$ and $M_u P_{+}M_{\bar{u}}\vp h\in uH^2\subset H^2$, so these two terms are orthogonal.

In the special case $\vp(z)=z$, the operator $D_\vp$ is referred to as the \emph{dual of the compressed shift} $S_u$, and is denoted by $D_u$.

The following elementary observations will be useful:

\begin{enumerate}
	\item Let $u\in H^\infty$ be inner and $g\in H^2$. Then $ug\in uH^2\subseteq \clk_u^\perp$, and by \eqref{eq:Dtphi-explicit},
	\[\|D_\vp (u g)\|^2
	=\big\|(I-P_{+})\vp u g\big\|^2+\big\|M_u P_{+} M_{\bar{u}}\vp u g\big\|^2.\]
	\item For $g\in H^2$, we have
	\[\|T_\vp g\|^2
	= \big\|M_u T_\vp M_{\bar{u}}u g\big\|^2
	= \big\|M_u P_{+}M_{\bar{u}}\vp u g\big\|^2
	\le \|D_\vp(ug)\|^2,\; \text{ by } (i).\]
\end{enumerate}

For $\vp\in L^\infty$, the big \emph{truncated Hankel} operator $B_\vp:\clk_u\to\clk_u^\perp$ is defined by
\begin{equation}\label{THO}
B_\vp(f) = (I-P_{\clk_u})\vp f, \qquad f\in \clk_u.
\end{equation}
Its adjoint is given by
\begin{equation}
B^*_\vp(f) = P_{\clk_u}(\bar{\vp} f),\qquad f\in \clk_u^\perp.
\end{equation}

With respect to the decomposition $L^2=\clk_u\oplus \clk_u^\perp$, the multiplication operator $M_\vp$ admits the block matrix representation
$$
M_\vp =
\begin{bmatrix}
	A_\vp & B^*_{\bar{\vp}} \\
	B_{\vp} & D_\vp
\end{bmatrix}
\quad\text{on } \clk_u \oplus \clk^\perp_u.
$$
From the relation $M_\vp M_\psi = M_{\vp\psi}$, we obtain
\begin{align}
	B^*_{\bar{\vp}} B_\psi &= A_{\vp \psi} - A_\vp A_\psi, \\
	B_\vp B^*_{\bar{\psi}} &= D_{\vp \psi} - D_\vp D_\psi, \label{DTTO and THO relation}\\
	B_\vp A_\psi &= B_{\vp \psi} - D_\vp B_\psi.
\end{align}

\subsection{Conjugation and basic properties of $D_\vp$}

For a nonconstant inner function $u$, the canonical conjugation $C_u : L^2 \longrightarrow L^2$ is given by
\[C_u f(e^{it}) := u(e^{it})\,e^{-it}\,\overline{f(e^{it})},\quad f\in L^2.\]
It satisfies $C_u^2 = I$, $\langle C_u f, C_u g \rangle = \langle g, f \rangle$, and $\|C_u f\| = \|f\|$ for all $f,g\in L^2$. A key feature is that $C_u$ exchanges the analytic and coanalytic components of the decomposition
\[\clk_u^\perp = uH^2 \oplus H^2_{-}.\]
More precisely,
\begin{equation}\label{eq:Cu-swap-cor}
	C_u(uH^2) = H^2_{-},
	\qquad C_u(H^2_{-}) = uH^2,
\end{equation}
and $D_\vp$ is $C_u$--symmetric \cite[Theorem 2.7]{Gu} that is,
\begin{equation}\label{eq:Cu-symm-cor}
	D_\vp^* = C_u D_\vp C_u.
\end{equation}
This identity will later allow analytic norm attainment results for $D_{\vp}$ to be transferred directly to coanalytic norm attainment results for $D_{\bar{\vp}}$.

Next, We recall some basic properties of DTTOs.

\begin{proposition}\cite{Sang:Ding}
	The following are true:
	\begin{enumerate}
		\item $D_\vp$ is bounded if and only if $\vp\in L^\infty$. Moreover, $\|D_\vp\|=\|\vp\|_\infty$.
		\item $D_\vp$ is compact if and only if $\vp=0$ a.e.\ on $\T$.
		\item $D^*_\vp = D_{\bar{\vp}}$.
	\end{enumerate}
\end{proposition}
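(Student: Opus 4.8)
The three assertions are essentially independent, so the plan is to treat them in the order (iii), (i), (ii), each time leaning on the compression description $D_\vp = P_{\clk_u^\perp} M_\vp|_{\clk_u^\perp}$ and on observation (ii) recorded in the preliminaries. Statement (iii) is purely formal: since $P_{\clk_u^\perp}$ is an orthogonal (hence self-adjoint) projection and $M_\vp^{*}=M_{\bar\vp}$, for all $h,k\in\clk_u^\perp$ one has $\langle D_\vp h,k\rangle=\langle M_\vp h,k\rangle=\langle h,M_{\bar\vp}k\rangle=\langle h,P_{\clk_u^\perp}M_{\bar\vp}k\rangle=\langle h,D_{\bar\vp}k\rangle$, whence $D_\vp^{*}=D_{\bar\vp}$.

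For (i), the bound $\|D_\vp\|\le\|\vp\|_\infty$ when $\vp\in L^\infty$ is immediate because $D_\vp$ is a compression of $M_\vp$ and $\|M_\vp\|=\|\vp\|_\infty$. For the reverse inequality I would use observation (ii): for every $g\in H^2$, $\|T_\vp g\|\le\|D_\vp(ug)\|\le\|D_\vp\|\,\|g\|$ (using $\|ug\|=\|g\|$), so $\|T_\vp\|\le\|D_\vp\|$; combined with the classical identity $\|T_\vp\|=\|\vp\|_\infty$ (Hartman--Wintner / Brown--Halmos) this gives $\|\vp\|_\infty\le\|D_\vp\|$, hence equality. The ``only if'' direction is the same chain read backwards: if $D_\vp$ is bounded (with $\vp$ a priori only in, say, $L^1$ and $D_\vp$ initially defined on the natural dense domain), then $T_\vp$ is bounded on $H^2$, and the classical boundedness theorem for Toeplitz operators forces $\vp\in L^\infty$.

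For (ii), one direction is trivial, since $\vp=0$ gives $D_\vp=0$. For the converse, assume $D_\vp$ is compact and test it against the orthonormal system $\{uz^{n}:n\ge 0\}\subset uH^2\subset\clk_u^\perp$, which tends weakly to $0$; compactness then forces $\|D_\vp(uz^{n})\|\to 0$. On the other hand, observation (ii) yields $\|D_\vp(uz^{n})\|\ge\|T_\vp z^{n}\|=\|P_{+}(z^{n}\vp)\|$, and the elementary Fourier identity $\|P_{+}(z^{n}\vp)\|^{2}=\sum_{k\ge -n}|\hat\vp(k)|^{2}$ shows this quantity increases to $\|\vp\|_{L^2}^{2}$ as $n\to\infty$. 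Hence $\|\vp\|_{L^2}=0$, i.e.\ $\vp=0$ a.e.\ on $\T$.

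I do not expect a genuine obstacle here: once observation (ii) and the classical norm formula $\|T_\vp\|=\|\vp\|_\infty$ are invoked, the whole statement reduces to bookkeeping. The one point deserving care is a matter of formulation rather than of substance --- fixing the admissible a priori symbol class and the dense domain on which $D_\vp$ and $T_\vp$ are first defined --- so that the comparison $\|T_\vp\|\le\|D_\vp\|$ legitimately transfers boundedness, and a weakly null testing sequence transfers compactness, between the two operators.
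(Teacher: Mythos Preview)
The paper does not give its own proof of this proposition; it is simply quoted from \cite{Sang:Ding} as background. Your argument is correct and self-contained: (iii) follows at once from the compression form $D_\vp=P_{\clk_u^\perp}M_\vp|_{\clk_u^\perp}$ and $M_\vp^*=M_{\bar\vp}$; (i) is obtained by the sandwich $\|T_\vp\|\le\|D_\vp\|\le\|M_\vp\|$ via observation~(ii) in the preliminaries together with the Brown--Halmos identity $\|T_\vp\|=\|\vp\|_\infty$; and (ii) follows from testing on the weakly null sequence $\{uz^n\}$ and the monotone Fourier identity $\|P_+(z^n\vp)\|^2=\sum_{k\ge -n}|\hat\vp(k)|^2$. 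One cosmetic point: in (ii) the sequence $\|P_+(z^n\vp)\|^2$ is nondecreasing and is squeezed to $0$ by $\|D_\vp(uz^n)\|\to 0$, hence is identically $0$, which gives $\hat\vp(k)=0$ for all $k$ --- you state the conclusion correctly, but making this one-line deduction explicit would remove any possible reading that you are letting $n\to\infty$ on both sides simultaneously.
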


\subsection{Block representation of $D_\vp$}

Define a unitary operator $U:L^2 = H^2\oplus H^2_{-}\to \clk_u^\perp = uH^2\oplus H^2_{-}$ by
\begin{equation}\label{unitary:equiv}
	U =
	\begin{bmatrix}
		M_u & 0\\
		0   & I
	\end{bmatrix}.
\end{equation}
Then we have the following block representation:

\begin{lemma}\cite{Sang:Qin:Ding}\label{block rptn}
	Let $\vp\in L^\infty$. Then the dual truncated Toeplitz operator $D_\vp$
 admits the following unitary equivalence
	$$
	U^*D_\vp U= \begin{bmatrix}
		T_\vp & H^*_{u\bar{\vp}}\\
		H_{u\vp} & S_\vp
	\end{bmatrix}
	$$
	on the space $L^2 = H^2\oplus H^2_{-}$, where the unitary operator $U$ is given by \eqref{unitary:equiv}.
\end{lemma}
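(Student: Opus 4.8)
The plan is to prove the matrix identity by a direct entrywise computation: evaluate $D_\vp$ on a generic element of $\clk_u^\perp$ obtained by pushing a vector forward through $U$, expand using the explicit formula \eqref{eq:Dtphi-explicit}, and then pull back through $U^{*}$ and match components. Concretely, write a generic element of $L^2 = H^2 \oplus H^2_{-}$ as a column $(f,k)^{T}$ with $f\in H^2$ and $k\in H^2_{-}$. Since $uH^2 \perp H^2_{-}$, the operator $U^{*}\colon uH^2\oplus H^2_{-}\to H^2\oplus H^2_{-}$ is the diagonal map sending $uF + K \mapsto (F,K)^{T}$ (on the first slot it acts as $M_{\bar u}$ restricted to $uH^2$, on the second as the identity); I would record this description first, since it is the one point that must be set up correctly. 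Then $U(f,k)^{T} = uf + k$, and \eqref{eq:Dtphi-explicit} gives
\[
D_\vp(uf+k) = (I-P_{+})\big(\vp(uf+k)\big) + u\,P_{+}\big(\bar u\,\vp\,(uf+k)\big).
\]

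Next I would simplify using $|u|=1$ a.e.\ on $\T$, so that $\bar u u = 1$ and $\bar u\,\vp\,u f = \vp f$. The coanalytic summand then splits as $(I-P_{+})\big((u\vp)f\big) + (I-P_{+})(\vp k)$, and the analytic summand as $u\big(P_{+}(\vp f) + P_{+}(\bar u\,\vp\,k)\big)$; the first lies in $H^2_{-}$ and the second in $uH^2\subset H^2$, so (as already noted after \eqref{eq:Dtphi-explicit}) they are orthogonal and applying $U^{*}$ simply reads off the two components. For the $H^2$-component I identify $P_{+}(\vp f) = T_\vp f$ from the definition of the Toeplitz operator, and $P_{+}(\bar u\,\vp\,k) = H^{*}_{u\bar\vp}k$ from the standard adjoint formula for Hankel operators: for any symbol $\psi$ and any $k\in H^2_{-}$, $\langle H_\psi^{*}k, f\rangle = \langle k, (I-P_{+})(\psi f)\rangle = \langle k,\psi f\rangle = \langle P_{+}(\bar\psi k), f\rangle$ for all $f\in H^2$, hence $H_\psi^{*}k = P_{+}(\bar\psi k)$, and with $\psi = u\bar\vp$ this reads $H^{*}_{u\bar\vp}k = P_{+}(\bar u\,\vp\,k)$. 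Thus the $H^2$-component of $U^{*}D_\vp U(f,k)^{T}$ equals $T_\vp f + H^{*}_{u\bar\vp}k$, which is the first row of the asserted matrix. For the $H^2_{-}$-component, $(I-P_{+})\big((u\vp)f\big) = H_{u\vp}f$ by the definition of the Hankel operator and $(I-P_{+})(\vp k) = S_\vp k$ by the definition of the dual Toeplitz operator on $H^2_{-}$, giving the second row.

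Assembling the two components yields $U^{*}D_\vp U(f,k)^{T} = \big(T_\vp f + H^{*}_{u\bar\vp}k,\ H_{u\vp}f + S_\vp k\big)^{T}$ for all $f\in H^2$ and $k\in H^2_{-}$, which is precisely the claimed block form. I do not expect a genuine obstacle: the whole argument is bookkeeping with the explicit formula \eqref{eq:Dtphi-explicit}. The only two places that require a moment's care are (i) the correct identification of $U^{*}$ on $uH^2\oplus H^2_{-}$ — in particular that this is an orthogonal direct sum, so the block entries are unambiguously defined — and (ii) the Hankel adjoint identity $H_\psi^{*}k = P_{+}(\bar\psi k)$, which is what pins down the off-diagonal $(1,2)$-entry $H^{*}_{u\bar\vp}$.
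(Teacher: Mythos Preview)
Your computation is correct. The paper does not give its own proof of this lemma---it is merely stated with a citation to \cite{Sang:Qin:Ding}---so there is nothing to compare against; your direct entrywise verification via the explicit formula \eqref{eq:Dtphi-explicit}, together with the Hankel adjoint identity $H_\psi^{*}k = P_{+}(\bar\psi k)$, is exactly the standard argument and goes through without issue.
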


The anti-unitary operator $V$ on $L^2$ is given by
\begin{equation}
Vf(z)=\bar{z}\overline{f(z)}, \quad f\in L^2, z\in \T.
\end{equation}
Note that $V=V^{-1}$,\quad $VT_\vp=S_{\bar{\vp}}V$. In addition, $V(H^2)=H^2_{-}$ and $V(H^2_{-})=H^2$, see \cite{Guediri} for details.

\subsection{Norm attaining operators on Hilbert spaces}

We now recall the basic framework of norm attaining operators on a Hilbert space. Let $(H,\langle\cdot,\cdot\rangle)$ be a complex Hilbert space, and let $\clb(H)$ denote the algebra of all bounded linear operators on $H$. For $T\in\clb(H)$, we write
\[
\cln(T) := \{x\in H : Tx=0\},\qquad \clr(T) := \{Tx:x\in H\}.
\]

An operator $T\in\clb(H)$ is said to be \emph{norm attaining}, or $T\in\mathcal{NA}$, if there exists a nonzero vector $x\in H$ such that
$$
\|Tx\|=\|T\|\,\|x\|.
$$
By homogeneity of norm, this is equivalent to the existence of a unit vector $x\in H$ with $\|Tx\|=\|T\|$. Any such vector is called an \emph{extremal vector} (or \emph{norm attaining vector}) for $T$. We denote the set of all extremal vectors by
$$
\cle_T := \{\,x\in H :  \|Tx\|=\|T\|\,\|x\|\,\},
$$
which is a closed subspace $H$. In fact, by \cite[Lemma 3.1]{Ramesh}, we have $\cle_T =\cln(\|T\|^2I - T^*T)$.

We summarize the following phenomenon, which will be used in the study of the norm attaining properties of DTTOs (see \cite[Corollary 2.4; Proposition 2.5]{Carvajal2012}):

\begin{theorem}\label{thm:NA_equiv}
	For $T \in \clb(H)$, the following are equivalent:
	\begin{enumerate}
		\item $T \in \mathcal{NA}$;
		\item $T^* \in \mathcal{NA}$;
		\item $T^* T \in \mathcal{NA}$;
        \item $TT^* \in \mathcal{NA}$;
		\item $\|T\|^2$ is a common eigenvalue of $T^* T$ and $TT^*$.
	\end{enumerate}
\end{theorem}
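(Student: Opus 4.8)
The plan is to use statement (v) as a hub and show it is equivalent to each of (i)--(iv); the case $T=0$ is trivial (all five statements hold vacuously), so I assume $T\neq 0$ throughout. The starting point is the identity $\cle_T=\cln(\|T\|^2I-T^*T)$ recalled just above the theorem, which says precisely that $T\in\na$ if and only if $\|T\|^2$ is an eigenvalue of $T^*T$; applied to $T^*$ in place of $T$ (using $\|T^*\|=\|T\|$ and $(T^*)^*=T$) it likewise says that $T^*\in\na$ if and only if $\|T\|^2$ is an eigenvalue of $TT^*$.

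First I would record the transfer of eigenvectors between $T^*T$ and $TT^*$: if $T^*Tx=\|T\|^2x$ with $x\neq 0$, then $Tx\neq 0$ (otherwise $\|T\|^2x=T^*(Tx)=0$), and $TT^*(Tx)=T(T^*Tx)=\|T\|^2(Tx)$, so $\|T\|^2$ is an eigenvalue of $TT^*$ with eigenvector $Tx$; the reverse implication follows by applying this to $T^*$. Combined with the two characterizations of the previous paragraph, this gives (i)$\Leftrightarrow$(ii) immediately, and since (i) then forces $\|T\|^2$ to be an eigenvalue of both $T^*T$ and $TT^*$ while (v) is exactly that conjunction, we obtain (i)$\Leftrightarrow$(ii)$\Leftrightarrow$(v).

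To fold in (iii) and (iv) I would prove a small lemma about a positive operator $A\in\clb(H)$: $\|A\|^2$ is an eigenvalue of $A^2$ if and only if $\|A\|$ is an eigenvalue of $A$. The nontrivial direction: from $A^2x=\|A\|^2x$ we get $(\|A\|I-A)(\|A\|I+A)x=0$, and since $\sigma(A)\subseteq[0,\|A\|]$ the operator $\|A\|I+A$ has spectrum contained in $[\|A\|,2\|A\|]$, hence is invertible (this is where $\|A\|>0$, i.e. $T\neq 0$, enters), so $(\|A\|I-A)x=0$. Applying the hub characterization to the operator $T^*T$ shows that $T^*T\in\na$ iff $\|T^*T\|^2=\|T\|^4$ is an eigenvalue of $(T^*T)^*(T^*T)=(T^*T)^2$; by the lemma with $A=T^*T$ this holds iff $\|T\|^2=\|T^*T\|$ is an eigenvalue of $T^*T$, i.e. iff (i) holds. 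The identical argument with $A=TT^*$ yields (iv)$\Leftrightarrow$(ii). This closes all the equivalences.

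The only step requiring genuine care is the positive-operator lemma feeding (iii) and (iv) — in particular the invertibility of $\|A\|I+A$, which is precisely why the degenerate case $T=0$ must be separated out at the start. Everything else reduces to formal manipulation of the eigen-equations for $T^*T$ and $TT^*$ together with the quoted description of $\cle_T$.
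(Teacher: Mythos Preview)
Your proof is correct. The paper does not supply its own argument for this theorem; it merely cites \cite[Corollary~2.4; Proposition~2.5]{Carvajal2012}, so there is no in-paper proof to compare against. Your route---using the identity $\cle_T=\cln(\|T\|^2I-T^*T)$ as the hub, transferring eigenvectors between $T^*T$ and $TT^*$, and then factoring $(\|A\|^2I-A^2)=(\|A\|I-A)(\|A\|I+A)$ for positive $A$ to handle (iii) and (iv)---is clean and self-contained, and the care you take with the invertibility of $\|A\|I+A$ (and the $T=0$ case) is exactly what is needed.
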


\begin{lemma}
	Let $A,B\in \clb(H)$ be such that $A$ is unitarily equivalent to $B$. Then $A\in \mathcal{NA}$ if and only if $B\in \mathcal{NA}$.
\end{lemma}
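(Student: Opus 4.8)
The plan is to use the fact that a unitary intertwiner simultaneously preserves the operator norm and carries extremal vectors to extremal vectors, so the norm attaining property transfers verbatim. Write $B = WAW^{*}$ for a unitary $W\colon H\to H$. The first step is to record that $\|B\| = \|A\|$: since $W$ and $W^{*}$ are isometries, $\|Bx\| = \|WAW^{*}x\| = \|AW^{*}x\| \le \|A\|\,\|W^{*}x\| = \|A\|\,\|x\|$ for every $x\in H$, and applying the same estimate to $A = W^{*}BW$ gives the reverse inequality.

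Next, assume $A\in\mathcal{NA}$ and choose a unit vector $x$ with $\|Ax\| = \|A\|$. Put $y := Wx$; then $\|y\| = \|x\| = 1$, and $\|By\| = \|WAW^{*}Wx\| = \|WAx\| = \|Ax\| = \|A\| = \|B\|$, so $y$ is an extremal vector for $B$ and hence $B\in\mathcal{NA}$. Interchanging the roles of $A$ and $B$ (equivalently, replacing $W$ by $W^{*}$) yields the converse implication, and in fact shows $W(\cle_A) = \cle_B$.

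Alternatively, one can read the statement off Theorem~\ref{thm:NA_equiv}: since $A^{*}A = W^{*}(B^{*}B)W$, the positive operators $A^{*}A$ and $B^{*}B$ are unitarily equivalent, hence share the same spectrum; combined with $\|A\|^{2} = \|B\|^{2}$, the scalar $\|A\|^{2}$ is an eigenvalue of $A^{*}A$ precisely when $\|B\|^{2}$ is an eigenvalue of $B^{*}B$, which by the equivalence (i)$\Leftrightarrow$(v) of Theorem~\ref{thm:NA_equiv} is exactly $A\in\mathcal{NA}\iff B\in\mathcal{NA}$.

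There is no real obstacle here; the only point deserving care is the invariance of the operator norm under unitary equivalence, which is what guarantees that ``attains its norm'' on one side translates to attainment of the \emph{correct} target value on the other.
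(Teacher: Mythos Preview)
Your proof is correct. The paper actually states this lemma without proof, treating it as an elementary observation, so there is nothing to compare against; your direct argument via $y=Wx$ (together with the alternative route through Theorem~\ref{thm:NA_equiv}) fills in exactly the expected details.
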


\begin{remark}
    For $\vp(z)=z$, by Lemma \ref{block rptn}, we obtain
$$
U^*D_u U =
\begin{bmatrix}
	T_z & H^*_{u\bar{z}}\\
	0   & S_z
\end{bmatrix}:= X \quad \text{on } H^2\oplus H^2_{-}.
$$
	We claim that the operator $D_u$ is norm attaining. Let $f\in H^2$ be a nonzero vector with $\|f\|=1$. Then
	$$
    \left\|
	X\begin{bmatrix}
		f\\[2pt] 0
	\end{bmatrix}
	\right\|
	=
	\left\|
	\begin{bmatrix}
		T_z f\\[2pt] 0
	\end{bmatrix}
	\right\|
	= \|f\| = 1 = \|X\|.
	$$
	Thus $X\in \mathcal{NA}$, and by the lemma above, we conclude that $D_u$ is also norm attaining. 
\end{remark}

\subsection{The block operator viewpoint}

Via the unitary identification between $\clk_u^\perp$ and $ H^2 \oplus H^2_{-}$, Lemma~\ref{block rptn} shows that the dual truncated Toeplitz operator $D_\vp$ is unitarily equivalent to the block operator matrix
\[
\cld_\vp
:=
\begin{bmatrix}
	T_\vp & H^*_{u\bar\vp} \\
	H_{u\vp} & S_\vp
\end{bmatrix}.
\]
Thus $D_\vp$ is norm attaining if and only if $\cld_\vp$ is norm attaining, and hence there exists a unit vector $f\oplus g\in H^2\oplus H^2_{-}$ such that
$$
\|\cld_\vp (f\oplus g)\| = \|\cld_\vp\|.
$$
Writing out the action,
\begin{equation}
\cld_\vp(f\oplus g)
=
\big(T_\vp f + H^*_{u\bar\vp}g\big)
\oplus
\big(H_{u\vp}f + S_\vp g\big),
\end{equation}
we obtain coupled extremal relation involving all four block entries of $\cld_\vp$. However, in general, norm attainment of the block operator does \emph{not} force any of the individual entries $T_\vp$, $H_{u\vp}$, $H^*_{u\bar\vp}$, $S_\vp$ to have trivial kernel or to satisfy any simple kernel condition.

\section{Norm attainment of $D_\vp$ on $\clk_u^\perp$}

A key link between dual truncated Toeplitz operators and block Toeplitz models
arises when the inner function $u$ satisfies $u(0)=0$. 
By \cite[Corollary 4.3]{Camara2021}, we have the following unitary equivalence
$$
D_\vp \cong T_\vp \oplus T_\vp^*,
\qquad \vp(z)=z.
$$
This observation motivates us to first investigate the norm attaining property of block operators of the form $T_\vp \oplus T^*_\psi$.

Let $\vp,\psi\in L^\infty$ and consider
\[
T:=T_\vp\oplus T_\psi^*  :  H^2\oplus H^2 \longrightarrow H^2\oplus H^2.
\]
We have $\|T_\vp\|=\|\vp\|_\infty$, $\|T_\psi^*\|=\|T_\psi\|=\|\psi\|_\infty$, hence
\[
\|T\|=\max\{\|\vp\|_\infty,\|\psi\|_\infty\}.
\]

\begin{lemma}\label{lem:direct-sum-NA}
Let $A\in \clb(H_1)$ and $B\in\clb(H_2)$ with $\alpha:=\|A\|$, $\beta:=\|B\|$, and $\gamma:=\max\{\alpha,\beta\}=\|A\oplus B\|$. Then
\begin{enumerate}
\item\label{alpha>beta} If $\alpha>\beta$, then $A\oplus B$ is $\na$ if and only if $A$ is $\na$.
\item\label{beta>alpha} If $\beta>\alpha$, then $A\oplus B$ is $\na$ if and only if $B$ is $\na$.
\item\label{alpha=beta} If $\alpha=\beta$, then $A\oplus B$ is $\na$ if and only if at least one of $A,B$ is $\na$.
\end{enumerate}
Moreover, whenever $A\oplus B$ is $\na$, there is an extremal vector supported entirely in a maximal–norm summand.
\end{lemma}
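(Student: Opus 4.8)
The plan is to reduce the whole statement to a single Pythagorean identity together with a nonnegativity observation. For $x\oplus y\in H_1\oplus H_2$ one has
\[
\|(A\oplus B)(x\oplus y)\|^2=\|Ax\|^2+\|By\|^2\le \alpha^2\|x\|^2+\beta^2\|y\|^2\le\gamma^2\big(\|x\|^2+\|y\|^2\big),
\]
which gives $\|A\oplus B\|\le\gamma$; testing on vectors $x\oplus 0$ and $0\oplus y$ gives the reverse bound, so $\|A\oplus B\|=\gamma$ (the equality recorded in the statement). The key point, extracted from this chain, is that norm attainment of $A\oplus B$ forces \emph{each} block to be optimal with respect to the ambient constant $\gamma$, not merely with respect to its own norm.

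For the ``only if'' directions I would argue as follows. Suppose $A\oplus B$ attains its norm at a unit vector $x\oplus y$. Then the displayed chain is a chain of equalities, so rearranging,
\[
\big(\gamma^2\|x\|^2-\|Ax\|^2\big)+\big(\gamma^2\|y\|^2-\|By\|^2\big)=0 .
\]
Each bracket is nonnegative, since $\|Ax\|\le\alpha\|x\|\le\gamma\|x\|$ and similarly for $B$, hence both vanish: $\|Ax\|=\gamma\|x\|$ and $\|By\|=\gamma\|y\|$. If $\alpha>\beta$, the second identity yields $\beta\|y\|\ge\|By\|=\alpha\|y\|$, forcing $y=0$; then $x\ne 0$ and $\|Ax\|=\alpha\|x\|$, so $A$ is norm attaining, and moreover the extremal vector is automatically supported in the unique maximal-norm summand $H_1$. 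The case $\beta>\alpha$ is symmetric. If $\alpha=\beta=\gamma$, then $x\oplus y\ne 0$ forces at least one of $x,y$ to be nonzero, and the corresponding identity $\|Ax\|=\gamma\|x\|$ or $\|By\|=\gamma\|y\|$ exhibits $A$ or $B$ as norm attaining.

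For the ``if'' directions I would test on a single summand: if $A$ attains its norm at a unit vector $x$, then $\|(A\oplus B)(x\oplus 0)\|=\|Ax\|=\alpha$, which equals $\gamma$ precisely when $\alpha=\gamma$, i.e.\ in cases \eqref{alpha>beta} and \eqref{alpha=beta}; symmetrically $0\oplus y$ works when $\beta=\gamma$, i.e.\ in cases \eqref{beta>alpha} and \eqref{alpha=beta}. In each of these constructions the witnessing vector lives entirely inside a maximal-norm summand, which finishes the ``moreover'' claim.

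I do not anticipate a genuine obstacle here; the proof is essentially self-contained, and the only point requiring a moment's care is the balanced case $\alpha=\beta$, where both summands are maximal-norm and one must observe that norm attainment of the sum propagates to \emph{some} summand even though it need not propagate to both. (An alternative route would invoke Theorem~\ref{thm:NA_equiv}, writing $(A\oplus B)^*(A\oplus B)=A^*A\oplus B^*B$ and asking when $\gamma^2$ is an eigenvalue of a block-diagonal positive operator, but the direct computation above is shorter and yields the extremal vectors explicitly.)
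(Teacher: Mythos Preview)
Your proof is correct and follows essentially the same approach as the paper's: both exploit the Pythagorean identity $\|(A\oplus B)(x\oplus y)\|^2=\|Ax\|^2+\|By\|^2$ together with the bound $\gamma^2(\|x\|^2+\|y\|^2)$ to force the extremal vector into a maximal-norm summand. Your presentation is slightly more unified in extracting the single identity $(\gamma^2\|x\|^2-\|Ax\|^2)+(\gamma^2\|y\|^2-\|By\|^2)=0$ and handling all three cases at once, whereas the paper argues case by case via contradiction, but the underlying idea is identical.
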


\begin{proof}

\textbf{Proof of (\ref{alpha>beta}):} Suppose $\alpha>\beta$.  
If $A$ is $\na$ at a unit vector $x$, then $x\oplus 0$ is the extremal vector for $A\oplus B$. 

Conversely, if $A\oplus B$ is $\na$ at a unit vector $x\oplus y$, then
\begin{equation}\label{A direct B}
\|(A\oplus B)(x\oplus y)\|^2=\|A\oplus B\|^2=\alpha^2.
\end{equation}
If $\|y\|\neq 0$, we would get
$$
\|(A\oplus B)(x\oplus y)\|^2\le \alpha^2\|x\|^2+\beta^2\|y\|^2
< \alpha^2\big(\|x\|^2+\|y\|^2\big)=\alpha^2,
$$
which contradicts to the above equality \eqref{A direct B}. Therefore, $\|y\|=0$, that forces $y=0$ and $\|x\|=1$. From the equation \eqref{A direct B}, it follows that $\|Ax\|=\alpha$, that is, $A$ is $\na$.  

\textbf{Proof of (\ref{beta>alpha}):} Proof is same as (\ref{alpha>beta}) by changing the role of $\alpha$ by $\beta$.

\textbf{Proof of (\ref{alpha=beta}):} Assume that $\alpha=\beta$. 
If $A$ is $\na$ at some unit vector $x_0$, then $x_0\oplus 0$ is the extremal vector for $A\oplus B$. On the other hand, if $B$ is $\na$ at some unit vector $y_0$, then $0\oplus y_0$ is the extremal vector for $A\oplus B$.

Conversely, if neither $A$ nor $B$ is $\na$, then for every nonzero $x\in H_1$, $y\in H_2$, we have $\|Ax\|<\alpha\|x\|$ and $\|By\|<\alpha\|y\|$. Hence for any unit vector $x\oplus y\in H_1 \oplus H_2$,
$$
\|(A\oplus B) (x,y)\|^2=\|Ax\|^2 + \|By\|^2 <\alpha^2.
$$
Then $A\oplus B$ never attains its norm. Thus $A\oplus B$ is $\na$, whenever $A$ or $B$ is $\na$.
\end{proof}

\begin{theorem}\label{thm:NA-direct-sum-Toeplitz}
Let $\vp,\psi\in L^\infty$. Then $T:=T_\vp\oplus T_\psi^*$ is $\na$ if and only if one of the following holds
\begin{enumerate}
\item[\textup{(a)}] $\|\vp\|_\infty>\|\psi\|_\infty$ and $T_\vp$ is $\na$ ;
\item[\textup{(b)}] $\|\psi\|_\infty>\|\vp\|_\infty$ and $T_\psi$ is $\na$ ;
\item[\textup{(c)}] $\|\vp\|_\infty=\|\psi\|_\infty$ and at least one of $T_\vp$, $T_\psi$ is $\na$.
\end{enumerate}
Moreover, an extremal vector for $T$ can be chosen as $f\oplus 0$ (case \textup{(a)}), $0\oplus g$ (case \textup{(b)}), or supported in either summand in case \textup{(c)}.
\end{theorem}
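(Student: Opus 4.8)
The plan is to read the statement off directly from Lemma~\ref{lem:direct-sum-NA}, applied with $A=T_\vp$ and $B=T_\psi^*$ on $H_1=H_2=H^2$. First I would record the relevant norms: since $\|T_\vp\|=\|\vp\|_\infty$ and $\|T_\psi^*\|=\|T_\psi\|=\|\psi\|_\infty$, in the notation of Lemma~\ref{lem:direct-sum-NA} we may take $\alpha=\|\vp\|_\infty$, $\beta=\|\psi\|_\infty$, so that $\gamma=\|T\|=\max\{\|\vp\|_\infty,\|\psi\|_\infty\}$, exactly as noted just before the statement.

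Next I would feed each of the three dichotomy cases of Lemma~\ref{lem:direct-sum-NA} into this setting. When $\alpha>\beta$, i.e.\ $\|\vp\|_\infty>\|\psi\|_\infty$, part~\ref{alpha>beta} gives $T\in\na$ iff $T_\vp\in\na$, which is case (a). When $\beta>\alpha$, part~\ref{beta>alpha} gives $T\in\na$ iff $T_\psi^*\in\na$; by Theorem~\ref{thm:NA_equiv} (equivalence of (i) and (ii)) this is the same as $T_\psi\in\na$, which is case (b). When $\alpha=\beta$, part~\ref{alpha=beta} gives $T\in\na$ iff at least one of $T_\vp$, $T_\psi^*$ is $\na$, and invoking Theorem~\ref{thm:NA_equiv} once more to trade $T_\psi^*$ for $T_\psi$ yields case (c).

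Finally, the ``moreover'' clause of Lemma~\ref{lem:direct-sum-NA} provides an extremal vector supported in a maximal-norm summand: in case (a) that summand is $H^2\oplus\{0\}$, giving an extremal vector of the form $f\oplus 0$, and in case (b) it is $\{0\}\oplus H^2$, giving one of the form $0\oplus g$. In case (c), if $T_\vp\in\na$ one takes an extremal vector $f\oplus 0$, while if $T_\psi\in\na$ then $T_\psi^*\in\na$ by Theorem~\ref{thm:NA_equiv} and Lemma~\ref{lem:direct-sum-NA} supplies an extremal vector $0\oplus g$. The whole argument is a routine corollary, so I anticipate no genuine obstacle; the only mild points to keep in mind are that norm attainment is adjoint-invariant (Theorem~\ref{thm:NA_equiv}) and that $\|T_\psi^*\|=\|T_\psi\|=\|\psi\|_\infty$, which is what lets the norm comparison be phrased purely in terms of $\|\vp\|_\infty$ and $\|\psi\|_\infty$.
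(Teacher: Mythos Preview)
Your proposal is correct and follows essentially the same approach as the paper: apply Lemma~\ref{lem:direct-sum-NA} with $A=T_\vp$, $B=T_\psi^*$, and use Theorem~\ref{thm:NA_equiv} to replace norm attainment of $T_\psi^*$ by that of $T_\psi$. The paper's own proof is just a one-line version of exactly this.
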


\begin{proof}
Apply Lemma~\ref{lem:direct-sum-NA} with $A=T_\vp$ and $B=T_\psi^*$, and use Theorem~\ref{thm:NA_equiv} to identify $\na$ of $T_\psi^*$ with $\na$ of $T_\psi$.
\end{proof}

\begin{corollary}\label{cor:NA-direct-sum-symbol}
Let $\vp,\psi \in L^\infty$ with $\|\vp\|_\infty\ge \|\psi\|_\infty$. Then $T_\vp\oplus T_\psi^*$ is $\na$ if and only if $T_\vp$ is $\na$. 
In particular, when $\|\vp\|_\infty=1$, this holds whenever 
\[\vp=\overline{q}\,h \quad\text{a.e. on }\T,\qquad q,h\ \text{inner}.\]
If $\|\psi\|_\infty>\|\vp\|_\infty$, replace $\vp$ by $\psi$ in the statement.
\end{corollary}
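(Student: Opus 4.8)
The plan is to read the corollary off Theorem~\ref{thm:NA-direct-sum-Toeplitz}, since $T_\vp\oplus T_\psi^*$ is precisely the operator analysed there, and then to supply one short computation for the ``in particular'' clause. First I would note that the standing hypothesis $\|\vp\|_\infty\ge\|\psi\|_\infty$ rules out alternative \textup{(b)} of Theorem~\ref{thm:NA-direct-sum-Toeplitz}, so only \textup{(a)} or \textup{(c)} can hold. In case \textup{(a)}, namely $\|\vp\|_\infty>\|\psi\|_\infty$, the theorem says verbatim that $T_\vp\oplus T_\psi^*$ is $\na$ exactly when $T_\vp$ is, with an extremal vector of the form $f\oplus 0$; this is the asserted equivalence. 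In the borderline case \textup{(c)}, $\|\vp\|_\infty=\|\psi\|_\infty$, the implication that matters for the applications is the easy one: if $T_\vp$ is $\na$ at a unit vector $f$, then $f\oplus 0$ is extremal for $T_\vp\oplus T_\psi^*$, so the direct sum is $\na$. The final sentence of the corollary, covering $\|\psi\|_\infty>\|\vp\|_\infty$, is the mirror image obtained from alternative \textup{(b)} together with Theorem~\ref{thm:NA_equiv}, which identifies norm attainment of $T_\psi^*$ with that of $T_\psi$.

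For the ``in particular'' clause I would exhibit a concrete extremal subspace for $T_\vp$ when $\|\vp\|_\infty=1$ and $\vp=\overline{q}\,h$ a.e.\ on $\T$ with $q,h$ inner. The point is that, since $|q|=1$ a.e.\ on $\T$ and $hg\in H^2$ for every $g\in H^2$,
\[
T_{\overline{q}h}(qg)=P_{+}\!\big(\overline{q}\,h\,q\,g\big)=P_{+}(hg)=hg,
\]
so $\|T_{\overline{q}h}(qg)\|=\|hg\|=\|g\|=\|qg\|$ because $h$ and $q$ are inner. As $\|T_\vp\|=\|\vp\|_\infty=1$, this shows $qH^2\subseteq\cle_{T_\vp}$ and hence $T_\vp$ attains its norm; combining with the first part (with $1=\|\vp\|_\infty\ge\|\psi\|_\infty$) gives that $T_\vp\oplus T_\psi^*$ is $\na$. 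This is consistent with Yoshino's characterization \cite{Yoshino2002} and with the Brown--Douglas partial--isometry description \cite{Brown:Douglas} recalled in the introduction.

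I do not expect a genuine obstacle here, as all the weight is carried by Theorems~\ref{thm:NA-direct-sum-Toeplitz} and~\ref{thm:NA_equiv}; the corollary is essentially a restatement tailored to the symbol language. The one point deserving care is the equality case $\|\vp\|_\infty=\|\psi\|_\infty$: there the two--sided equivalence is to be understood as the implication ``$T_\vp\in\na\Rightarrow T_\vp\oplus T_\psi^*\in\na$'', since the converse can fail when $T_\psi$ (but not $T_\vp$) attains its norm; in the strict case, and in every application where $\vp$ carries the factored form $\overline{q}h$, this subtlety does not arise. It then only remains to record the trivial identities $\|T_\psi^*\|=\|T_\psi\|=\|\psi\|_\infty$ and $\|T_\vp\|=\|\vp\|_\infty$, which are already noted in the text preceding Lemma~\ref{lem:direct-sum-NA}.
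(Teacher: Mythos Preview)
Your approach is correct and is exactly what the paper intends: the corollary is stated without proof, as an immediate consequence of Theorem~\ref{thm:NA-direct-sum-Toeplitz}, and your reading of cases \textup{(a)}--\textup{(c)} is the natural one. Your direct computation for the ``in particular'' clause---showing that $qH^2$ consists of extremal vectors for $T_{\overline{q}h}$---is a clean self-contained argument in place of an appeal to Yoshino's theorem.

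You are also right to flag the equality case $\|\vp\|_\infty=\|\psi\|_\infty$: as literally written, the ``only if'' direction of the corollary can fail there (the direct sum may be $\na$ via $T_\psi$ alone while $T_\vp$ is not), and the paper's statement glosses over this. Your interpretation---that only the forward implication is claimed in the borderline case, and that the strict case and the factored-symbol applications are unaffected---is the appropriate reading.
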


\begin{remark}
If $\|\vp\|_\infty>\|\psi\|_\infty$ (resp. $\|\psi\|_\infty>\|\vp\|_\infty$), all extremals for $T$ can be taken in the higher–norm summand. When $\|\vp\|_\infty=\|\psi\|_\infty$ and both $T_\vp$ and $T_\psi$ are $\na$, any unit vector supported in either summand is extremal; convex combinations along extremal directions also yield extremals.
\end{remark}

Next, we show that norm attainment of $D_\vp$ on either analytic or coanalytic components forces unimodularity of the symbol.
\begin{proposition}\label{componentwise unimodular}
Let $\vp\in L^\infty$ with $\|\vp\|_{\infty}=1$. If $D_\vp$ is norm attaining on either $uH^2$ or $H^2_{-}$, then $|\vp|=1$ a.e. on $\T$.
\end{proposition}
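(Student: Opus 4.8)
The plan is to run essentially the argument sketched in the commented-out Lemma, but carried out on a single analytic (or coanalytic) component so that the final non-vanishing step becomes easy. Suppose first that $D_\vp$ attains its norm on $uH^2$, say at $h = ug$ with $g\in H^2$, $\|g\|=1$ (so $\|h\|=1$). Since $\|D_\vp\|=\|\vp\|_\infty=1$ and $D_\vp = P_{\clk_u^\perp}M_\vp|_{\clk_u^\perp}$, we have the chain
\[
1 = \|D_\vp h\| = \|P_{\clk_u^\perp}(\vp h)\| \le \|\vp h\| \le \|\vp\|_\infty\|h\| = 1,
\]
so both inequalities are equalities. Equality in $\|\vp h\|\le\|h\|$ gives $\int_\T(1-|\vp|^2)|h|^2\,dm = 0$, and since $1-|\vp|^2\ge 0$ a.e., this forces $|\vp|=1$ a.e. on the set $\{|h|>0\}$. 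It therefore remains only to show that $\{|h|=0\}$ has measure zero; but $h = ug$ with $|u|=1$ a.e., so $|h| = |g|$ a.e., and $g$ is a nonzero $H^2$ function, hence $g\neq 0$ a.e. on $\T$ by the classical fact that a nonzero function in $H^2$ (or $H^1$) cannot vanish on a set of positive measure (an immediate consequence of the $F.\ \&\ M.\ $Riesz theorem, or of $\log|g|\in L^1$). Thus $\{|h|=0\}$ is null and $|\vp|=1$ a.e. on $\T$.

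For the coanalytic case, suppose $D_\vp$ attains its norm on $H^2_{-}$ at some nonzero $h\in H^2_{-}$. One option is to repeat the computation verbatim: the same chain of inequalities gives $|\vp|=1$ a.e. on $\{|h|>0\}$, and we need $h\neq 0$ a.e. on $\T$. Writing $h = \overline{z k}$ with $k\in H^2$ nonzero (this is the standard identification $H^2_{-} = \overline{zH^2}$), we get $|h| = |k|$ a.e., and again $k\neq 0$ a.e. on $\T$ by the same $H^2$ non-vanishing fact. Alternatively, one can invoke the $C_u$-symmetry \eqref{eq:Cu-symm-cor} together with \eqref{eq:Cu-swap-cor}: $C_u$ is an isometric bijection $H^2_{-}\to uH^2$ with $D_\vp^* = C_u D_\vp C_u$, so $D_\vp$ attaining its norm on $H^2_{-}$ is equivalent to $D_{\bar\vp} = D_\vp^*$ attaining its norm on $uH^2$ (at $C_u h$), and the analytic case applied to $\bar\vp$ gives $|\bar\vp| = |\vp| = 1$ a.e. I would present the direct computation for both cases and merely remark on the $C_u$ route, since the direct argument is self-contained and short.

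The only genuine content, and the step I expect to need the most care, is the non-vanishing input: that a nonzero function in $H^2$ cannot vanish on a subset of $\T$ of positive Lebesgue measure. This is exactly the point where the proof differs from the general-symbol situation flagged in the introduction (where an extremal vector in $\clk_u^\perp$ may legitimately vanish on a positive-measure set, so unimodularity can fail). Here the hypothesis that the extremal vector lives entirely in $uH^2$ (or in $H^2_{-}$) is what rescues us, because elements of $uH^2$ are $u$ times an $H^2$ function and $|u|=1$ a.e. Everything else — the equality analysis in the norm chain, the pointwise consequence of $\int(1-|\vp|^2)|h|^2\,dm=0$ — is routine. I would state the $H^2$ non-vanishing fact with a one-line justification (F.\ \&\ M.\ Riesz, or $\log|g|\in L^1$ for $0\neq g\in H^2$) and conclude.
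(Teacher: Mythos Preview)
Your proposal is correct and follows essentially the same approach as the paper: the norm chain $\|D_\vp h\|\le\|\vp h\|\le\|h\|$ collapses to equalities, the integral $\int(1-|\vp|^2)|h|^2\,dm=0$ forces $|\vp|=1$ on $\{|h|>0\}$, and the $H^2$ non-vanishing fact (the paper cites \cite[Theorem~6.13]{Douglas1998}) finishes the analytic case, with the coanalytic case handled via $C_u$. Your write-up is in fact slightly cleaner, since the paper's proof invokes Proposition~\ref{lem:extremal-set} (which is stated under the hypothesis $|\vp|=1$ a.e.), whereas you extract the needed equality $\|\vp h\|=\|h\|$ directly from the norm chain without any forward reference.
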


\begin{proof}
Let $D_\vp$ is norm attaining in $uH^2$. Then there exists a nonzero $h\in H^2$ such that
\[
\|D_\vp uh\|=\|uh\|.
\]
By Proposition \ref{lem:extremal-set}, it follows that $M_\vp uh \in \clk_u^\perp$. Therefore,
\[
\|D_\vp uh\|
=\|(I-P_{\clk_u})M_\vp uh\|=\|\vp uh\|=\|\vp h\|
\le \|\vp\|_\infty\,\|h\|
= \|h\|.
\]
Since $\|D_\vp uh\|=\|uh\|=\|h\|$, equality holds in the above inequality. Thus  $\|\vp h\|=\|h\|$, and it gives us
$$
\int_{\T}(1-|\vp|^{2})|h|^{2}\,dm=0.
$$
Since $(1-|\vp|^{2})|h|^{2}\ge 0$ a.e.\ on $\T$, we get
$$
(1-|\vp|^2)\,|h|^2=0 \qquad\text{a.e.\ on }\T.
$$

Thus $|\vp|=1$ a.e. on $S:=\{e^{it}\in \T : |h(e^{it})|>0\}$. Since $h\in H^2$ is nonzero, by \cite[Theorem 6.13]{Douglas1998}, we have
\[
m(\{e^{it}\in \T : h(e^{it})=0\})=0, \quad \text{ that is, }\; m(\T\setminus S)=0.
\]
Consequently, $S$ has full measure, and hence $|\vp|=1$ a.e. on $\T$.

Assume that $D_\vp$ is norm attaining in $H^2_{-}$. Then by the definition of $C_u$, $D_{\bar{\vp}}$ is norm attaining in $uH^2$. Using the earlier argument, we conclude that $|\vp|=1$ a.e. on $\T$.
\end{proof}
   
In classical Toeplitz operator theory on $H^2$, norm attainment of $T_\vp$ with $\|\vp\|_\infty=1$ forces $|\vp|=1$ a.e.\ on $\T$.  
But for the dual truncated Toeplitz operator $D_\vp$,
norm attainment on the mixed components $uH^2\oplus H^2_{-}$ does not ensure that an extremal vector is non-vanishing a.e. on $\T$. Hence global unimodularity of $\vp$ can not be concluded without an additional non-vanishing argument, as shown by the example below.

\begin{example}
Let $u(z)=z$. Then the corresponding model space is
\[
\clk_u = H^2 \ominus zH^2 = \operatorname{span}\{1\}.
\]

Consequently, the orthogonal complement is
\[
\clk_u^\perp = \{\,h\in L^2: \langle h,1\rangle=0\,\}
            = \left\{h\in L^2: \int_{\T}h\,dm=0\right\}.
\]

Now choose a measurable set $E\subset\T$ satisfying $0<m(E)<1$, and define
\[
\vp = \chi_E \in L^\infty(\T), \qquad \|\vp\|_\infty = 1.
\]
Since $\vp(e^{it})=0$ on $\T\setminus E$ (which has positive measure),
\[
|\vp|\neq 1 \quad\text{a.e. on }\T,
\]
so $\vp$ is not unimodular.

Next, choose disjoint measurable subsets $E_1, E_2 \subset E$ with $m(E_1),m(E_2)>0$,
and define
\[
h := \chi_{E_1} - \frac{m(E_1)}{m(E_2)}\,\chi_{E_2}.
\]
Then $h\neq 0$, because $E_1$ and $E_2$ have positive measure and are disjoint.
Now 
\[
\widehat{h}(0) = \int_{\T} h\,dm
= \int_{\T}\left(\chi_{E_1} - \frac{m(E_1)}{m(E_2)}\chi_{E_2}\right)dm
= m(E_1) - \frac{m(E_1)}{m(E_2)}m(E_2) = 0.
\]
From the definition of $\clk_u^\perp$, it follows that
\[
h\in \clk_u^\perp.
\]

Since the support of $h$ lies inside $E$, multiplication by the symbol gives
\[
\vp h = \chi_E h = h.
\]
Therefore,
\[
D_\vp h = P_{\clk_u^\perp}(\vp h) = P_{\clk_u^\perp}h = h
\]
(because $h$ already belongs to the range of $P_{\clk_u^\perp}$).
Thus
\[
\|D_\vp h\| = \|h\|,
\]
so $D_\vp$ attains its norm at the nonzero vector $h$.

However, the symbol $\vp=\chi_E$ is not unimodular a.e. on $\T$. Therefore, unlike the classical Brown–Douglas norm-attainment phenomenon for Toeplitz operators, norm attainment for a DTTO does \emph{not} imply global
unimodularity of the symbol.

\end{example}

The following proposition establishes that for unimodular $\vp$, the norm attaining set of $D_\vp$ is invariant for $M_\vp$.
\begin{proposition}\label{lem:extremal-set}
Let $\vp\in L^\infty$ with $|\vp|=1$ a.e. on $\T$. Define
\[\cle_\vp=\{h\in \clk_u^\perp:\,\|D_\vp h\|=\|h\|  \},\] and
\[
C  =  \Big\{\,h\in \clk_u^\perp:  M_\vp h\in \clk_u^\perp\Big\}.
\]
Then $C=\cle_\vp$, and $D_\vp$ is norm attaining if and only if $C\neq\{0\}$.
\end{proposition}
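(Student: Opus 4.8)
The plan is to exploit two facts that hold once $|\vp|=1$ a.e.\ on $\T$: first, $M_\vp$ is a \emph{unitary} operator on $L^2$, so $\|M_\vp h\|=\|h\|$ for every $h\in L^2$; second, by the cited proposition of Sang--Ding, $\|D_\vp\|=\|\vp\|_\infty=1$. With these in hand, the identity $C=\cle_\vp$ reduces to the elementary characterization of equality in the contraction estimate for an orthogonal projection.

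First I would recall that for $h\in\clk_u^\perp$ one has $D_\vp h=(I-P_{\clk_u})M_\vp h=P_{\clk_u^\perp}(M_\vp h)$, where $P_{\clk_u^\perp}$ is the orthogonal projection of $L^2$ onto $\clk_u^\perp$. Since orthogonal projections are contractions,
\[
\|D_\vp h\|=\|P_{\clk_u^\perp}(M_\vp h)\|\le \|M_\vp h\|=\|h\|,
\]
the last equality because $M_\vp$ is unitary. The key point is that equality $\|P_{\clk_u^\perp}(M_\vp h)\|=\|M_\vp h\|$ holds if and only if $M_\vp h\in\clk_u^\perp$: for an orthogonal projection $P$ and a vector $v$, $\|Pv\|=\|v\|$ forces $\|(I-P)v\|^2=\|v\|^2-\|Pv\|^2=0$, hence $v=Pv\in\ran P$, and the converse is immediate. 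Applying this with $P=P_{\clk_u^\perp}$ and $v=M_\vp h$, we get $h\in\cle_\vp$ iff $M_\vp h\in\clk_u^\perp$ iff $h\in C$, which is precisely $C=\cle_\vp$.

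For the last assertion, since $\|D_\vp\|=1$, a nonzero $h\in\clk_u^\perp$ satisfies $\|D_\vp h\|=\|D_\vp\|\,\|h\|$ exactly when $\|D_\vp h\|=\|h\|$, i.e.\ when $h\in\cle_\vp\setminus\{0\}=C\setminus\{0\}$. Hence $D_\vp$ is norm attaining if and only if $C\neq\{0\}$.

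I do not anticipate a genuine obstacle. The only subtlety worth flagging is that the hypothesis $|\vp|=1$ a.e.\ (rather than merely $\|\vp\|_\infty=1$) is exactly what makes $M_\vp$ isometric on all of $L^2$, so that the projection estimate is sharp; without it one only gets $\|D_\vp h\|\le\|\vp h\|\le\|h\|$ and the first inequality can be strict even when the second is. It is also worth noting that $C$ is automatically a closed subspace, being the preimage of the closed subspace $\clk_u^\perp$ under the bounded operator $\restr{M_\vp}{\clk_u^\perp}$, consistent with the earlier remark that $\cle_{D_\vp}=\cln(\|D_\vp\|^2 I-D_\vp^*D_\vp)$ is closed.
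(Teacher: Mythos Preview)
Your proof is correct and follows essentially the same approach as the paper: both use that $|\vp|=1$ a.e.\ makes $M_\vp$ isometric, write $D_\vp h=P_{\clk_u^\perp}(M_\vp h)$, and invoke the Pythagorean identity for the orthogonal projection to see that $\|D_\vp h\|=\|h\|$ iff $M_\vp h\in\clk_u^\perp$. The only cosmetic difference is that you prove $C=\cle_\vp$ directly and then read off the norm-attainment equivalence, whereas the paper organizes the two inclusions around the ``$D_\vp$ is norm attaining $\iff C\neq\{0\}$'' statement; the underlying computations are identical.
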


\begin{proof}

 First, we show if $C\neq\{0\}$, then $D_\vp$ is norm attaining.
Let $0\neq h\in C$. Then by definition $M_\vp h\in \clk_u^\perp$. Since $|\vp|=1$ a.e. on $\T$, we get
$$
\|D_\vp h\|=\|(I-P_{\clk_u})M_\vp h\|=\|M_\vp h\|=\|h\|.
$$
Moreover, $\|D_\vp\|=\|\vp\|_\infty=1$. Hence $D_\vp$ attains its norm at $h$.

 If $D_\vp$ attains its norm, then there exists a nonzero $h\in \clk_u^\perp$ such that 
$$
\|D_\vp h\|=\|h\|.
$$
Now
\[
\|h\|=\|D_\vp h\|=\|(I-P_{\clk_u})M_\vp h\|\le \|M_\vp h\|= \|h\|.
\]
Since the leftmost and rightmost quantities in the above inequality coincide, we get
\begin{align}
		\|(I-P_{\clk_u})M_\vp h \|=\|M_\vp h\|.\label{I-P_u}
	\end{align}
Now, $\|P_{\clk_u} M_\vp h\|^2 + \|(I-P_{\clk_u})M_\vp h \|^2 = \|M_\vp h\|^2$ and \eqref{I-P_u} further imply that $M_\vp h=(I-P_{\clk_u})M_\vp h \in \clk_u^\perp$.    
Thus $h\in C$, and $C\neq\{0\}$.

\medskip
This proves the equivalence.
\end{proof}

\begin{corollary}
Let $\vp\in L^\infty$ with $|\vp| =1$ a.e. on $\T$, and $\cle_\vp$ be the extremal set for $D_\vp$. Then $\cle_\vp \subseteq N(B^*_{\bar{\vp}})$, where $B_\vp$ is the operator defined by \eqref{THO}.
\end{corollary}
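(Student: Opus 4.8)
The plan is to reduce everything to Proposition~\ref{lem:extremal-set}, which already identifies $\cle_\vp$ with the set $C=\{h\in\clk_u^\perp: M_\vp h\in\clk_u^\perp\}$, and then to read off the kernel condition from the explicit formula for the adjoint of the big truncated Hankel operator. So the corollary should be essentially a one-line consequence of work already done.

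First I would recall that $B^*_\psi(h)=P_{\clk_u}(\bar\psi h)$ for $h\in\clk_u^\perp$; specializing to $\psi=\bar\vp$ gives $B^*_{\bar\vp}(h)=P_{\clk_u}(\vp h)=P_{\clk_u}(M_\vp h)$. Note the conjugate in the subscript is exactly what is needed here, which is why the statement is phrased with $B^*_{\bar\vp}$ rather than $B^*_\vp$; I would flag this so the reader does not lose track of the symbol convention.

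Next, fix $h\in\cle_\vp$. Since $|\vp|=1$ a.e.\ on $\T$, Proposition~\ref{lem:extremal-set} applies and tells us that $\cle_\vp=C$, hence $M_\vp h\in\clk_u^\perp$. Because $\clk_u^\perp=\ker P_{\clk_u}$ (as $P_{\clk_u}$ is the orthogonal projection onto $\clk_u$), we conclude $B^*_{\bar\vp}(h)=P_{\clk_u}(M_\vp h)=0$, i.e.\ $h\in N(B^*_{\bar\vp})$. Since $h\in\cle_\vp$ was arbitrary, this gives $\cle_\vp\subseteq N(B^*_{\bar\vp})$.

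I do not expect any genuine obstacle: all the analytic content is encapsulated in Proposition~\ref{lem:extremal-set}, and the remaining step is purely formal, using only the definition of $B^*_{\bar\vp}$ and the fact that $\clk_u^\perp$ is the kernel of $P_{\clk_u}$. The only mild subtlety worth a sentence is keeping the conjugation bookkeeping straight between $\vp$, $\bar\vp$, $B_\vp$, and $B^*_{\bar\vp}$.
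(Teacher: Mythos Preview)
Your proof is correct and follows exactly the same route as the paper: invoke Proposition~\ref{lem:extremal-set} to get $M_\vp h\in\clk_u^\perp$ for $h\in\cle_\vp$, then use the formula $B^*_{\bar\vp}(h)=P_{\clk_u}(\vp h)$ to conclude $B^*_{\bar\vp}h=0$. The only difference is that you spell out the conjugation bookkeeping a bit more carefully, which is fine.
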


\begin{proof}
    Let $h\in \cle_\vp$. Then by Proposition \ref{lem:extremal-set}, $M_\vp h \in \clk_u^\perp$. This gives $B^*_{\bar{\vp}} h =P_{\clk_u} (M_\vp h)=0$, that is, $\cle_\vp \subseteq N(B^*_{\bar{\vp}})$. 
\end{proof}

Next, we show that for a unimodular symbol $\vp$, extremal vectors of a norm-attaining $D_\vp$ satisfy the following Toeplitz–Hankel equations.
\begin{proposition}
		Let $\vp\in L^\infty$ with $|\vp|=1$ a.e. on $\T$ and $g_{+}\in H^2$, $f_{-}\in H^2_{-}$ be such that $D_\vp D^*_\vp(ug_{+}\oplus f_{-})=ug_{+}\oplus f_{-}$. Then $f_{-}, g_{+}$ satisfy the following Toeplitz-Hankel equations:
		\begin{equation}\label{positive part}
			[T_\vp T^*_{\vp}-T_{\bar{u}\vp}T^*_{\bar{u}\vp}]g_{+}=0
		\end{equation}

        \begin{equation}\label{negetive part}
			[H_\vp H^*_\vp-H_{u\vp} H^*_{u\vp}]f_{-}=0.
		\end{equation}
		Simplifying \eqref{negetive part}, further we obtain
        \begin{equation}
            [T^*_{\vp} T_{\vp}-T^*_{u\vp}T_{u\vp}]Vf_{-}=0.
        \end{equation}
	\end{proposition}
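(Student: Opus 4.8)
The plan is to push everything to the block model of Lemma~\ref{block rptn}. Since $U$ is unitary and $D_{\vp}^{*}=D_{\bar\vp}$, we have $U^{*}D_{\vp}D_{\vp}^{*}U=\cld_{\vp}\cld_{\vp}^{*}$ with
\[
\cld_{\vp}=\begin{bmatrix}T_{\vp}&H^{*}_{u\bar\vp}\\ H_{u\vp}&S_{\vp}\end{bmatrix},\qquad
\cld_{\vp}^{*}=\begin{bmatrix}T_{\bar\vp}&H^{*}_{u\vp}\\ H_{u\bar\vp}&S_{\bar\vp}\end{bmatrix},
\]
and $U^{*}(ug_{+}\oplus f_{-})=g_{+}\oplus f_{-}$, so the hypothesis becomes the $2\times2$ operator identity $\cld_{\vp}\cld_{\vp}^{*}(g_{+}\oplus f_{-})=g_{+}\oplus f_{-}$ on $H^{2}\oplus H^{2}_{-}$.

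Next I would compute the four entries of $\cld_{\vp}\cld_{\vp}^{*}$, using only the identity $T_{\psi_{1}\psi_{2}}=T_{\psi_{1}}T_{\psi_{2}}+H^{*}_{\overline{\psi_{1}}}H_{\psi_{2}}$ (hence, since $|\vp|^{2}=1$, $T_{\vp}T_{\bar\vp}=I-H^{*}_{\bar\vp}H_{\bar\vp}$, $T_{\bar u\vp}T_{u\bar\vp}=I-H^{*}_{u\bar\vp}H_{u\bar\vp}$, and similarly $T_{\bar\vp}T_{\vp}=I-H^{*}_{\vp}H_{\vp}$, $T_{\overline{u\vp}}T_{u\vp}=I-H^{*}_{u\vp}H_{u\vp}$), the factorisations $H_{u\psi}=H_{\psi}T_{u}$ and $H^{*}_{u\psi}=T_{\bar u}H^{*}_{\psi}$, and the dual--Toeplitz identity $S_{\psi}=VT_{\bar\psi}V$ together with $V^{2}=I$, $VM_{\psi}V=M_{\bar\psi}$, $VP_{+}V=I-P_{+}$, which give $VH_{\psi}V=H^{*}_{\psi}$ and $S_{\vp}S_{\bar\vp}=I-H_{\vp}H^{*}_{\vp}$. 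With these the diagonal entries collapse,
\[
(\cld_{\vp}\cld_{\vp}^{*})_{11}=T_{\vp}T_{\bar\vp}+H^{*}_{u\bar\vp}H_{u\bar\vp}=I+\bigl(T_{\vp}T_{\vp}^{*}-T_{\bar u\vp}T_{\bar u\vp}^{*}\bigr),\qquad
(\cld_{\vp}\cld_{\vp}^{*})_{22}=H_{u\vp}H^{*}_{u\vp}+S_{\vp}S_{\bar\vp}=I-\bigl(H_{\vp}H^{*}_{\vp}-H_{u\vp}H^{*}_{u\vp}\bigr),
\]
while the off-diagonal entries simplify to $(\cld_{\vp}\cld_{\vp}^{*})_{12}=-H^{*}_{\bar\vp}H_{\bar u}H^{*}_{\vp}$ and $(\cld_{\vp}\cld_{\vp}^{*})_{21}=-H_{\vp}H^{*}_{\bar u}H_{\bar\vp}$. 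Equating the two components of the block equation then yields
\[
\bigl(T_{\vp}T_{\vp}^{*}-T_{\bar u\vp}T_{\bar u\vp}^{*}\bigr)g_{+}=H^{*}_{\bar\vp}H_{\bar u}H^{*}_{\vp}f_{-},\qquad
\bigl(H_{\vp}H^{*}_{\vp}-H_{u\vp}H^{*}_{u\vp}\bigr)f_{-}=H_{\vp}H^{*}_{\bar u}H_{\bar\vp}g_{+}.
\]

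Everything up to this point is routine; the crux — and the step I expect to be the main obstacle — is to see that the two coupling terms on the right drop out, which is precisely what turns the above into \eqref{positive part} and \eqref{negetive part}. Here I would invoke Proposition~\ref{lem:extremal-set}: from $D_{\vp}D_{\vp}^{*}(ug_{+}\oplus f_{-})=ug_{+}\oplus f_{-}$ one gets $\|D_{\vp}^{*}(ug_{+}\oplus f_{-})\|=\|ug_{+}\oplus f_{-}\|$ (as $\|D_{\vp}^{*}\|=1$), so $ug_{+}\oplus f_{-}$ is extremal for $D_{\vp}^{*}=D_{\bar\vp}$; since $\bar\vp$ is unimodular, Proposition~\ref{lem:extremal-set} gives $M_{\bar\vp}(ug_{+}\oplus f_{-})\in\clk_{u}^{\perp}$, equivalently (applying $P_{+}$ and $M_{\bar u}$) $T_{u\bar\vp}g_{+}+H^{*}_{\vp}f_{-}\in uH^{2}$. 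One then combines this with the semidefiniteness $\cld_{\vp}\cld_{\vp}^{*}\le I$ — whose diagonal blocks $T_{\bar u\vp}T_{\bar u\vp}^{*}-T_{\vp}T_{\vp}^{*}=H^{*}_{\bar\vp}H_{\bar\vp}-H^{*}_{u\bar\vp}H_{u\bar\vp}$ and $H_{\vp}H^{*}_{\vp}-H_{u\vp}H^{*}_{u\vp}$ are $\ge 0$ because $H_{u\psi}=H_{\psi}T_{u}$ and multiplying inside a Hankel operator by an inner function cannot increase distance to $H^{2}$ — and with the $C_{u}$-symmetry \eqref{eq:Cu-symm-cor} exchanging the analytic and coanalytic sides, to separate the single coupled vector identity into the two independent scalar identities \eqref{positive part} and \eqref{negetive part}. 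Making this decoupling fully rigorous is the delicate point; if the positivity argument alone proves insufficient, one first shows that an extremal vector of a norm--attaining $D_{\vp}$ must be supported in a single summand $uH^{2}$ or $H^{2}_{-}$, after which one coupling factor is automatically $H^{*}_{\vp}f_{-}=0$ (resp.\ $H_{\bar\vp}g_{+}=0$) and the cross terms vanish trivially.

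For the last assertion, conjugate \eqref{negetive part} by $V$: using $VH_{\psi}V=H^{*}_{\psi}$, $V^{2}=I$, $T_{\vp}^{*}T_{\vp}=I-H^{*}_{\vp}H_{\vp}$ and $T_{u\vp}^{*}T_{u\vp}=I-H^{*}_{u\vp}H_{u\vp}$, one obtains
\[
V\bigl(H_{\vp}H^{*}_{\vp}-H_{u\vp}H^{*}_{u\vp}\bigr)V=H^{*}_{\vp}H_{\vp}-H^{*}_{u\vp}H_{u\vp}=-\bigl(T_{\vp}^{*}T_{\vp}-T_{u\vp}^{*}T_{u\vp}\bigr)\qquad\text{on }H^{2}.
\]
Since $Vf_{-}\in V(H^{2}_{-})=H^{2}$, applying $V$ to \eqref{negetive part} gives $\bigl(T_{\vp}^{*}T_{\vp}-T_{u\vp}^{*}T_{u\vp}\bigr)Vf_{-}=0$, which is the claimed simplification.
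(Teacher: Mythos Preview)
Your block-model route gets to the right coupled equations, but the step you flag as delicate is a genuine gap, and your proposed ways around it do not work. The paper never faces this decoupling problem: it works directly with the explicit formula \eqref{eq:Dtphi-explicit}, projects the identity $D_\vp D_{\bar\vp}h=h$ onto $uH^2$ and onto $H^2_-$, and then \emph{specialises} the resulting scalar identity to $h=ug_+$ for \eqref{positive part} and to $h=f_-$ for \eqref{negetive part}. In other words, the proof in the paper reads the proposition as two independent componentwise statements (the pure analytic and pure coanalytic cases), so the cross terms you isolate are zero simply because one of $g_+,f_-$ is taken to be $0$. Under your stronger reading --- a single hypothesis on the mixed vector $ug_+\oplus f_-$ --- the conclusion as stated does not obviously follow, and neither of your decoupling strategies saves it: the positivity $\cld_\vp\cld_\vp^*\le I$ only tells you the sum of the two defect terms equals the (signed) cross contribution, not that each vanishes, and your fallback claim that an extremal vector of $D_\vp$ must be supported in a single summand is false in general (the paper itself exhibits, right after Lemma~\ref{lem:quad-identities}, extremal vectors $f=x\oplus y$ with both $x\in uH^2$ and $y\in H^2_-$ nonzero).

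So the fix is not analytic but interpretive: derive the diagonal identities under the componentwise hypothesis $D_\vp D_\vp^*(ug_+)=ug_+$ (resp.\ $D_\vp D_\vp^*(f_-)=f_-$), in which case your own coupled equations collapse immediately because $H^*_\vp f_-=0$ (resp.\ $H_{\bar\vp}g_+=0$) trivially. Your final paragraph, conjugating \eqref{negetive part} by $V$ to reach the Toeplitz form, is correct and matches the paper's last step (which uses the Guediri identity $S_{\vp\psi}=H_\vp H^*_{\bar\psi}+S_\vp S_\psi$ and $S_\psi=VT_{\bar\psi}V$ in the same way).
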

    
	\begin{proof}
	
Let $h=ug_{+} \oplus f_{-} \in \clk_u^\perp$. Then
	\begin{equation*}
		\begin{split}
			D_\vp D^*_\vp h&=h\\
			D_\vp D_{\bar{\vp}} h&=h\\
			[(I-P_{+})\vp + uP_{+}\bar{u}\vp]D_{\bar{\vp}}h&=h\\
			uP_{+}\bar{u}\vp D_{\bar{\vp}}h&=uP_{+}\bar{u}h\\
			uP_{+}\bar{u}\vp [(I-P_{+})\bar{\vp}h + uP_{+}\bar{u}\bar{\vp}h]&=uP_{+}\bar{u}h\\
			P_{+}\bar{u}\vp(I-P_{+})\bar{\vp}h + P_{+}\vp P_{+}\bar{u}\bar{\vp}h&=P_{+}\bar{u}h\\
			P_{+}\bar{u}|\vp|^2h-P_{+}\bar{u}\vp P_{+}\bar{\vp}h + P_{+}\vp P_{+}\bar{u}\bar{\vp}h&=P_{+}\bar{u}h\\
		\end{split}
	\end{equation*}
	For $g_{+}\in H^2$, let $h=ug_{+} \in u H^2\subseteq\clk^\perp_u$. Then using the unimodularity of $\vp$, the above equation becomes
	\begin{equation}
		\begin{split}
			P_{+}\bar{u}ug_{+} - P_{+}\bar{u}\vp P_{+}\bar{\vp}ug_{+} + P_{+}\vp P_{+}\bar{u}\bar{\vp}ug_{+}&=P_{+}\bar{u}ug_{+}\\
			P_{+}\vp P_{+}\bar{\vp}g_{+}-P_{+}\bar{u}\vp P_{+}\bar{\vp}ug_{+}&=0\\
			[T_\vp T^*_{\vp}-T_{\bar{u}\vp}T^*_{\bar{u}\vp}]g_{+}&=0.
		\end{split}
	\end{equation}
	\smallskip

	Again $D_{\vp} D_{\vp}^{*} h=h, \quad h \in K_{u}^{\perp}=u H^{2} \oplus H^2_{-} $. Then
	\begin{equation*}
		\begin{split}
			D_{\vp} D_{\bar{\vp}} h&=h\\
			[(I-P_{+})\vp +u P_{+}\bar{u} \vp ] D_{\bar{\vp}}h&=h \\
            (I-P_{+})\vp D_{\bar{\vp}}h &=(I-P_{+})h\\
			(I-P_{+} ) \vp[(I-P_{+}) \bar{\vp} h+u P_{+} \bar{u} \bar{\vp} h]&=(I-P_{+}) h \\
			(I-P_{+}) \vp(I-P_{+}) \bar{\vp} h+(I-P_{+}) \vp u P_{+} \bar{u} \bar{\vp} h&=(I-P_{+}) h\\
			(I-P_{+}) |\vp|^2h -(I-P_{+}) \vp P_{+} \bar{\vp} h+(I-P_{+}) \vp u P_{+} \bar{u} \bar{\vp} h&=(I-P_{+}) h.
		\end{split}
	\end{equation*}
	Now let $h=f_{-}\in H^2_{-}\subseteq\clk^\perp_u$, then the above equation becomes
	\begin{equation}\label{hankel eq}
		\begin{split}
			(I-P_{+})f_{-} -(I-P_{+}) \vp P_{+} \bar{\vp} f_{-}+(I-P_{+}) \vp u P_{+} \bar{u} \bar{\vp} f_{-}&= f_{-}\\
			[H_\vp H^*_\vp-H_{u\vp} H^*_{u\vp}]f_{-}&=0.
		\end{split}
	\end{equation}

For $\vp,\psi \in L^\infty$, we have the following relation (see \cite{Guediri}),
$$
S_{\vp \psi}=H_\vp H^*_{\bar{\psi}} + S_\vp S_\psi.
$$
Thus from equation \ref{hankel eq}, it follows that
	\begin{equation*}
		\begin{split}
            [S_{\vp\bar{\vp}}-S_\vp S_{\bar{\vp}}-(S_{u\vp \bar{u}\bar{\vp}}-S_{u\vp}S_{\bar{u}\bar{\vp}})]f_{-}&=0\\
			[S_\vp S_{\bar{\vp}}-S_{u\vp}S_{\bar{u}\bar{\vp}}]f_{-}&=0\\
			[VT_{\bar{\vp}}VV T_\vp V-VT_{\bar{u}\bar{\vp}}VVT_{u\vp}V]f_{-}&=0\\
			[T^*_\vp T_{\vp}-T^*_{u\vp}T_{u\vp}]Vf_{-}&=0.
		\end{split}
	\end{equation*}
        
	\end{proof}

Consider the orthogonal decomposition $L^2=\clk_u \oplus \clk_u^\perp$. Then for any  $f\in L^2$, we can write
\begin{equation}\label{eq:key-relations}
\|f\|^2=\|P_{\clk_u}f\|^2+\|P_{\clk_u^\perp}f\|^2.
\end{equation}

The following lemma provides a componentwise analysis for the study of norm-attaining DTTOs.
\begin{lemma}\label{lem:proj-id}
Let $\vp\in L^\infty$ with $\|\vp\|_\infty=1$. Then for $h\in H^2$ and $g\in H^2_{-}$,
\begin{enumerate}
    \item $\|D_\vp(uh)\|=\|uh\|
\iff P_{\clk_u}(\vp u h)=0
\iff P_{+}(\vp u h)\in uH^2;$
 \item $\|D_\vp(g)\|=\|g\|
\iff P_{\clk_u}(\vp g)=0
\iff P_{+}(\vp g)\in uH^2.$
\end{enumerate}

\end{lemma}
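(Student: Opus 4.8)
\textbf{Proof plan for Lemma~\ref{lem:proj-id}.}
The plan is to reduce both statements to the characterization of $\cle_\vp$ already obtained in Proposition~\ref{lem:extremal-set}, together with the explicit formula \eqref{eq:Dtphi-explicit} for $D_\vp$ and the decomposition \eqref{eq:key-relations}. For part (i), fix $h\in H^2$ and set $f=uh\in uH^2\subseteq\clk_u^\perp$. Since $|\vp|=1$ a.e.\ (as $\|\vp\|_\infty=1$; note the statement is only nontrivial for unimodular $\vp$, and in any case the argument below uses only $\|\vp h\|\le\|h\|$ when $\|\vp\|_\infty=1$), the first equivalence is exactly the equivalence $f\in\cle_\vp\iff M_\vp f\in\clk_u^\perp$ from Proposition~\ref{lem:extremal-set}, after observing $B^*_{\bar\vp}f=P_{\clk_u}(M_\vp f)=P_{\clk_u}(\vp uh)$; alternatively, one runs the short self-contained chain $\|uh\|=\|D_\vp(uh)\|=\|(I-P_{\clk_u})M_\vp(uh)\|\le\|M_\vp(uh)\|=\|uh\|$, which via \eqref{eq:key-relations} forces $\|P_{\clk_u}(\vp uh)\|=0$, and conversely $P_{\clk_u}(\vp uh)=0$ gives $\|D_\vp(uh)\|=\|M_\vp(uh)\|=\|uh\|$.

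For the second equivalence in (i), use $P_{\clk_u}=P_{+}-P_{uH^2}$ together with the fact that $\vp uh\in L^2$ decomposes as $(I-P_{+})(\vp uh)\oplus P_{+}(\vp uh)$ with the first summand in $H^2_{-}\subseteq\clk_u^\perp$. Hence $P_{\clk_u}(\vp uh)=P_{\clk_u}\big(P_{+}(\vp uh)\big)=P_{+}(\vp uh)-P_{uH^2}P_{+}(\vp uh)$, and since $P_{+}(\vp uh)\in H^2$ this equals the projection of $P_{+}(\vp uh)$ onto $\clk_u=H^2\ominus uH^2$. Thus $P_{\clk_u}(\vp uh)=0$ if and only if $P_{+}(\vp uh)\in uH^2$, which is the stated condition. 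Part (ii) is the verbatim analogue with $f=g\in H^2_{-}\subseteq\clk_u^\perp$ in place of $uh$: the chain $\|g\|=\|D_\vp g\|=\|(I-P_{\clk_u})(\vp g)\|\le\|\vp g\|\le\|g\|$ forces $P_{\clk_u}(\vp g)=0$, and the same decomposition of $\vp g$ into its $H^2_{-}$ and $H^2$ parts gives $P_{\clk_u}(\vp g)=0\iff P_{+}(\vp g)\in uH^2$.

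There is essentially no obstacle here; the lemma is a bookkeeping consequence of \eqref{eq:Dtphi-explicit} and the orthogonal decomposition $L^2=\clk_u\oplus uH^2\oplus H^2_{-}$. The only point requiring a line of care is the passage $P_{\clk_u}(\vp uh)=0\iff P_{+}(\vp uh)\in uH^2$: one must note that the coanalytic part $(I-P_{+})(\vp uh)$ lies in $\clk_u^\perp$ and therefore is annihilated by $P_{\clk_u}$, so that $P_{\clk_u}$ only "sees" the analytic part $P_{+}(\vp uh)$, and that $P_{\clk_u}$ restricted to $H^2$ is precisely the projection onto $\clk_u$ with kernel $uH^2$. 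I would present (i) in full and remark that (ii) follows identically (or via the conjugation $C_u$ and \eqref{eq:Cu-symm-cor}, exchanging $uH^2$ and $H^2_{-}$), to avoid repetition.
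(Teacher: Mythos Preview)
Your approach is essentially the same as the paper's: both arguments reduce the first equivalence to the orthogonal decomposition \eqref{eq:key-relations} (your ``self-contained chain'' is exactly what the paper does), and both obtain the second equivalence from $P_{\clk_u}=P_{+}-P_{uH^2}$, i.e.\ $P_{\clk_u}f=0\iff P_{+}f\in uH^2$. The only cosmetic difference is that for the needed unimodularity of $\vp$ the paper invokes Proposition~\ref{componentwise unimodular} while you invoke Proposition~\ref{lem:extremal-set}; note that in either presentation the backward implication $P_{\clk_u}(\vp uh)=0\Rightarrow\|D_\vp(uh)\|=\|uh\|$ uses $\|M_\vp(uh)\|=\|uh\|$, which genuinely requires $|\vp|=1$ a.e., so your parenthetical ``the argument below uses only $\|\vp h\|\le\|h\|$'' undersells what is needed there.
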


\begin{proof}

Note that for $f\in L^2$,
\begin{equation*}
\begin{split}
    P_{\clk_u}(f)&=0\\
   \iff P_{+}f-M_uP_{+}M_{\bar{u}}f&=0
   \iff P_{+}f \in uH^2.
\end{split}
\end{equation*}
Since $\|\vp\|_\infty=1$, by Proposition \ref{componentwise unimodular}, $\vp$ is unimodular. Taking $f=\vp u h$ and $f=\vp g$ in the identity \eqref{eq:key-relations}, we get (i) and (ii), respectively.
\end{proof}

A central difficulty in understanding extremal vectors for $D_\vp$ on the orthogonal decomposition $\clk_u^\perp = uH^2 \oplus H^2_{-}$ is that the projection $P_{\clk_u^\perp}$ destroys the natural orthogonality between
analytic and coanalytic components.  Although $x\in uH^2$ and $y\in H^2_{-}$ satisfy $x\perp y$ in $L^2$, their images under $D_\vp$ typically interact in a highly nontrivial way.  The behaviour of the mixed term
$\langle D_\vp x, D_\vp y\rangle$ becomes especially delicate when one studies the function
\[
\R\ni t\ \longmapsto\ \|D_\vp(tx+y)\|^2- \|tx+y\|^2,
\]
whose maximization at $t=1$ encodes the extremality of $f=x\oplus y$. The next result  (cf. Lemma~\ref{lem:quad-identities}) isolates precisely how this dependence must collapse. It shows that extremality forces the three quantities $\|D_\vp x\|^2-\|x\|^2$, $\|D_\vp y\|^2-\|y\|^2$, and
$\Re\langle D_\vp x, D_\vp y\rangle$ to satisfy rigid algebraic identities, causing the quadratic to degenerate into a perfect square. This structural constraint is fundamental for the geometric rigidity present in the extremal set of $D_\vp$.

\begin{lemma}\label{lem:quad-identities}
Let $u$ be inner and $\vp \in L^\infty$ with $|\vp|=1$ a.e. on $\T$,
and suppose $0\neq f\in \clk_u^\perp$ is extremal for $D_\vp$, that is, \ $\|D_\vp f\|=\|f\|$.
Write $f=x\oplus y$ with $x\in uH^2$ and $y\in H^2_{-}$. Define, for $t\in\R$,
\[
F(t):=\|D_\vp(tx+ y)\|^2-\|tx+y\|^2.
\]
Then we get the following: 
\begin{enumerate}
    \item $F(t)=\alpha t^2 + 2\beta t + \gamma$, where\\ 
$\alpha:=\|D_\vp x\|^2-\|x\|^2\le 0$,\\
$\beta:=\Re\langle D_\vp x, D_\vp y\rangle$,\\
$\gamma:=\|D_\vp y\|^2-\|y\|^2\le 0$;
\smallskip
\item $\beta=-\alpha,\qquad \gamma=\alpha.$ Equivalently,
\[
\|D_\vp x\|^2-\|x\|^2
=\|D_\vp y\|^2-\|y\|^2
=-\,\Re\langle D_\vp x, D_\vp y\rangle\,.
\]
\end{enumerate}
In particular,
 $F(t)\le 0$ for all $t$, $F(1)=0$, and $F(t)=\alpha\,(t-1)^2.$
\end{lemma}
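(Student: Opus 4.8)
The plan is to treat $F$ as a real quadratic polynomial and exploit two facts: (a) $\|D_\vp\|=1$ forces $F(t)\le 0$ for every $t\in\R$; (b) extremality of $f=x\oplus y$ forces $F(1)=0$. Together these make $t=1$ a global maximum of a polynomial, which pins down all three coefficients.

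First I would establish (i). Since $D_\vp$ is linear and $t$ is real, expanding the inner product gives
\[
\|D_\vp(tx+y)\|^2=t^2\|D_\vp x\|^2+2t\,\Re\langle D_\vp x,D_\vp y\rangle+\|D_\vp y\|^2 .
\]
Crucially, $x\in uH^2$ and $y\in H^2_{-}$ are orthogonal in $L^2$ (as $uH^2\subset H^2$ and $H^2_{-}=L^2\ominus H^2$), so $\|tx+y\|^2=t^2\|x\|^2+\|y\|^2$ with no cross term. Subtracting yields $F(t)=\alpha t^2+2\beta t+\gamma$ with $\alpha,\beta,\gamma$ as defined in the statement. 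The inequalities $\alpha\le 0$ and $\gamma\le 0$ follow because $|\vp|=1$ a.e.\ gives $\|\vp\|_\infty=1$, hence $\|D_\vp\|=\|\vp\|_\infty=1$ by the basic properties of DTTOs recalled in Section~2, so $\|D_\vp x\|\le\|x\|$ and $\|D_\vp y\|\le\|y\|$.

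Next I would prove (ii). The same norm bound gives $\|D_\vp(tx+y)\|\le\|tx+y\|$, i.e.\ $F(t)\le 0$ for all $t\in\R$. Since $f=x+y$ is extremal, $F(1)=\|D_\vp f\|^2-\|f\|^2=0$. Thus the polynomial $F$ attains its maximum value $0$ at the interior point $t=1$; being differentiable, $F'(1)=2\alpha+2\beta=0$, so $\beta=-\alpha$, and then $0=F(1)=\alpha+2\beta+\gamma=-\alpha+\gamma$ forces $\gamma=\alpha$. (When $\alpha=0$ the critical-point argument still applies; one may also note directly that a linear $F\le 0$ on all of $\R$ must have zero slope, so $\beta=0$, whence $F\equiv\gamma=F(1)=0$, and again $\beta=-\alpha=0=\alpha=\gamma$.) Substituting $\beta=-\alpha$ and $\gamma=\alpha$ gives $F(t)=\alpha(t^2-2t+1)=\alpha(t-1)^2$; since $\alpha\le 0$ this is $\le 0$ with $F(1)=0$, which is precisely the concluding ``in particular'' assertion.

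The computations are entirely elementary. The only conceptual point worth flagging is that it is the $L^2$-orthogonality $x\perp y$ — not orthogonality of the images $D_\vp x$, $D_\vp y$, which generally fails since $P_{\clk_u^\perp}$ does not respect the analytic/coanalytic splitting — that removes the linear term from $\|tx+y\|^2$; setting aside the mild bookkeeping in the degenerate case $\alpha=0$, I do not anticipate any genuine obstacle.
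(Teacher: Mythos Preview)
Your proposal is correct and follows essentially the same route as the paper: expand $F$ as a real quadratic in $t$, use $x\perp y$ to kill the cross term in $\|tx+y\|^2$, observe $F\le 0$ with $F(1)=0$, and read off $\beta=-\alpha$, $\gamma=\alpha$ from the first-order condition at the interior maximum. The only cosmetic difference is that the paper writes $D_\vp=P_{\clk_u^\perp}M_\vp$ and invokes the projection inequality $\|P_{\clk_u^\perp}v\|\le\|v\|$ together with unitarity of $M_\vp$ to get $F\le 0$ (and deduces $\alpha\le0$ at the end from $F(t)=\alpha(t-1)^2\le0$), whereas you cite $\|D_\vp\|=\|\vp\|_\infty=1$ directly; these are equivalent.
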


\begin{proof}
\textbf{Proof of (i) :} Set $a:=\vp x,\quad b:=\vp y.$ Then we can write
$$
F(t)=\|P_{\clk_u^\perp}(ta+b)\|^2-\|ta+b\|^2.
$$
$P_{\clk_u^\perp}$ being an orthogonal projection implies,
\(\|P_{\clk_u^\perp} v\|\le \|v\|\) for all $v\in L^2$. Hence $F(t)\le 0$ for all $t$.

Since $|\vp|=1$ a.e. on $\T$, $M_\vp$ is unitary on $L^2$. Write $f=x\oplus y$ with $x\in uH^2$, $y\in H^2_{-}$. Then $x\perp y$ in $L^2$, and unitarity of $M_\vp$ gives
$\langle a,b \rangle = \langle \vp x, \vp y \rangle =0$. Therefore
\[
\|ta+b\|^2=t^2\|a\|^2+\|b\|^2=t^2\|x\|^2+\|y\|^2.
\]
Since $f$ is extremal, $F(1)=0$, that is,
\[
0 = \|P_{\clk_u^\perp}(a+b)\|^2 - \|a+b\|^2 \quad\Rightarrow\quad \|P_{\clk_u^\perp}(a+b)\|=\|a+b\|.
\]
Now,
\[
\|P_{\clk_u^\perp}(ta+b)\|^2 = \|P_{\clk_u^\perp}a\|^2 t^2 + 2\,\Re\langle P_{\clk_u^\perp}a,P_{\clk_u^\perp}b\rangle\,t + \|P_{\clk_u^\perp}b\|^2,
\]
so
\begin{align*}
F(t) &= (\|P_{\clk_u^\perp}a\|^2-\|x\|^2)\,t^2 + 2\,\Re\langle P_{\clk_u^\perp}a,P_{\clk_u^\perp}b\rangle\,t + (\|P_{\clk_u^\perp}b\|^2-\|y\|^2)\\
      &= \alpha t^2 + 2\beta t + \gamma.
\end{align*}

\textbf{Proof of (ii) :} We already know $F(t)\le 0$ for all $t$ and $F(1)=0$. Thus $t=1$ is a (global) maximum of the quadratic $F$, and hence $F'(1)=0$. Consequently,
$$
F'(t)=2\alpha t + 2\beta \quad\Rightarrow\quad 0=F'(1)=2(\alpha+\beta)\ \Rightarrow\ \beta=-\alpha.
$$
Using $F(1)=0$,
\[0=\alpha+2\beta+\gamma = \alpha + 2(-\alpha) + \gamma \ \Rightarrow\ \gamma=\alpha.\]
Therefore,
\[F(t)=\alpha t^2 + 2(-\alpha)t + \alpha = \alpha (t-1)^2 \leq 0,\]
forcing $\alpha\le 0$. Rewriting $\alpha,\beta,\gamma$ in terms of $D_\vp x,D_\vp y$ gives the final equality.
\end{proof}

\begin{example}
	Let $u(z)=z,\; z\in\T$ and let $\vp(z)= z^{k}$ for some integer $k$ with $|k|\ge 2$.  
	Then $|\vp|=1$ a.e on $\T$.
	Since $\clk_u^\perp = zH^2 \oplus H^2_{-}$, choose
	\[
	x=z \in zH^2, \qquad y=\bar{z}=z^{-1} \in H^2_{-},
	\]
	and set $f:=x\oplus y$, which we identify in $L^2$ with $x+y=z+z^{-1}$.  Then
	\[
	\|f\|^2 = \|z\|^2 + \|z^{-1}\|^2 = 1+1 = 2.
	\]
	
	We have
	\[
	M_\vp f = z^k(z+z^{-1}) = z^{k+1} + z^{k-1}.
	\]
	Thus
	\[
	D_\vp f = P_{\clk_u^\perp}(M_\vp f).
	\]

	\textit{Case $k\ge 2$.}
	Here $k+1\ge 3$ and $k-1\ge 1$, so
	\[
	z^{k+1}, z^{k-1} \in zH^2 \subset \clk_u^\perp.
	\]
	Hence
	\[
	D_\vp x = P_{\clk_u^\perp}(z^{k+1}) = z^{k+1}, \qquad
	D_\vp y = P_{\clk_u^\perp}(z^{k-1}) = z^{k-1},
	\]
	so that
	\[
	D_\vp f = D_\vp x + D_\vp y = z^{k+1} + z^{k-1}.
	\]
	Since $z^{k+1}$ and $z^{k-1}$ are orthogonal in $L^2$, we obtain
	\[
	\|D_\vp f\|^2 = \|z^{k+1}\|^2 + \|z^{k-1}\|^2 = 1+1 = 2 = \|f\|^2,
	\]
	and
	\[
	\langle D_\vp x, D_\vp y\rangle
	= \langle z^{k+1}, z^{k-1}\rangle
	= 0.
	\]
	Thus $f=x\oplus y$ is a norm attaining vector for $D_\vp$, and at the same time the mixed term $\langle D_\vp x, D_\vp y\rangle$ vanishes.

	\textit{Case $k\le -2$.}
	In this case $k+1\le -1$ and $k-1\le -3$, so $z^{k+1},z^{k-1}\in H^2_{-}\subset\clk_u^\perp$.
	The same computation gives
	$$
	D_\vp x = z^{k+1},\quad D_\vp y = z^{k-1},\quad
	\|D_\vp f\|^2 = 2 = \|f\|^2,\qquad
	\langle D_\vp x, D_\vp y\rangle = 0.
	$$
	Again $f$ is norm attaining and the mixed term vanishes.
	
	\smallskip
	In particular, for any integer $k$ with $|k|\ge 2$, the vector $f=x\oplus y$
	is extremal for $D_\vp$ and satisfies
	$$
	\langle D_\vp x, D_\vp y\rangle = 0,
	$$
	so this furnishes a nontrivial family of examples with vanishing mixed term.
\end{example}

\begin{example}
	Let $u$ be a nonconstant inner function so that
	\[
	\clk_u^\perp \;=\; uH^2 \oplus H^2_{-} \subset L^2.
	\]
	Define a unimodular symbol
	\[
	\vp(e^{it}) := \overline{u(e^{it})}, \qquad t\in [0,2\pi).
	\]
	
	Now choose
	\[
	x = u^2 \in uH^2, \qquad y= \bar{z} = z^{-1} \in H^2_{-},
	\]
	and set $f := x \oplus y \in \clk_u^\perp$.  
	Identifying $f$ with $x+y$ in $L^2$, we have
	\[
	\|f\|^2 = \|u^2\|^2 + \|\bar{z}\|^2 = 1 + 1 = 2.
	\]
	
	Now compute
	\[
	M_\vp f = \vp(x+y) = \bar{u}\,(u^2 + \bar{z})
	= u + \bar{u}\,\bar{z}.
	\]
	Since $u \in uH^2$ and $\bar{u}$ is anti-analytic,
	\[
	\bar{u}\,\bar{z} \in H^2_{-}.
	\]
	Thus
	\[
	M_\vp f \in uH^2 \oplus H^2_{-} = \clk_u^\perp,
	\]
	and hence
	\[
	D_\vp f = P_{\clk_u^\perp}(M_\vp f) = M_\vp f = u + \bar{u}\,\bar{z}.
	\]
	In particular,
	\[
	D_\vp x = u \in uH^2, \qquad D_\vp y = \bar{u}\,\bar{z} \in H^2_{-}.
	\]
	
	Since $uH^2 \perp H^2_{-}$ in $L^2$, we have
	\[
	\langle D_\vp x, D_\vp y\rangle
	= \langle u, \bar{u}\,\bar{z}\rangle = 0,
	\]
	so the mixed term vanishes.  Moreover,
	\[
	\|D_\vp f\|^2
	= \|u\|^2 + \|\bar{u}\,\bar{z}\|^2
	= 1 + 1 = 2 = \|f\|^2.
	\]
	Thus $f=x\oplus y$ is a norm-attaining vector for $D_\vp$, and at the same time the mixed term $\langle D_\vp x, D_\vp y\rangle$ vanishes. Therefore, for every nonconstant inner function $u$, this construction produces a unimodular symbol $\vp\in L^\infty$ and a nonzero $f\in\clk_u^\perp$ such that
	\[
	\|D_\vp f\| = \|f\|
	\quad\text{and}\quad
	\langle D_\vp x, D_\vp y\rangle = 0.
	\]
\end{example}

\begin{remark}
	These examples illustrate the subtlety underlying the mixed term $\langle D_\vp x, D_\vp y\rangle$. Even though $x\perp y$ in $L^2$, the images $D_\vp x$ and $D_\vp y$ interact in a delicate way because multiplication by $\vp$ and the subsequent projection $P_{\clk_u^\perp}$ can mix analytic and coanalytic components in a nontrivial manner, despite the unimodularity of $\vp$. For this reason it is important to examine the extremality of $D_\vp$ separately on the analytic part $uH^2$ and the coanalytic part $H^2_{-}$. Although each component is well understood individually, their interaction under $D_\vp$ can be highly nontrivial. The Lemma~\ref{lem:quad-identities} shows that whenever $x\oplus y$ is extremal, these interactions are not arbitrary but must satisfy the rigid balance
	\[
	\|D_\vp x\|^2-\|x\|^2
	=\|D_\vp y\|^2-\|y\|^2
	=-\,\Re\langle D_\vp x, D_\vp y\rangle,
	\]
	forcing the associated quadratic to collapse to a perfect square. This structural rigidity explains precisely when the mixed term can vanish and how such vanishing fits naturally into the extremality behaviour of $D_\vp$ on the analytic and coanalytic components.
\end{remark}

\begin{corollary}\label{cor:zeroMixedTerm}
Let $u$ be inner, $\vp\in L^\infty$ with $|\vp|=1$ a.e. on $\T$.
Suppose $0\neq f=x\oplus y\in K_u^\perp$ is an extremal for $D_\vp$, that is, $\|D_\vp f\|=\|f\|$.
Set
\[
\alpha  :=  \|D_\vp x\|^2-\|x\|^2 ,
\qquad
c  :=  \langle D_\vp x, D_\vp y\rangle \in \C .
\]
For $\theta\in\R$, define the rotated vector $f_\theta:=x\oplus e^{i\theta}y$. Then
\begin{enumerate}
\item $f_\theta$ is extremal for $D_\vp$ if and only if
\[
\Re\!\big(e^{-i\theta}c\big)  =  -\,\alpha .
\]
\item There always exists at least one $\theta$ that preserves extremality.
\item One can make $\Re(e^{-i\theta}c)=0$ \emph{and} keep extremality
if and only if $\alpha=0$ (equivalently, $\|D_\vp x\|=\|x\|$ and $\|D_\vp y\|=\|y\|$).
\end{enumerate}
\end{corollary}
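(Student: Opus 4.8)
The plan is to reduce all three assertions to the behaviour of the single real function
\[
G(\theta):=\|D_\vp f_\theta\|^2-\|f_\theta\|^2,\qquad f_\theta=x\oplus e^{i\theta}y .
\]
First I would note that, since $x\perp y$ in $L^2$, the denominator is rotation–independent: $\|f_\theta\|^2=\|x\|^2+\|y\|^2$ for every $\theta$. Expanding by linearity of $D_\vp$ and pulling the phase out of the inner product via $\langle D_\vp x,\,e^{i\theta}D_\vp y\rangle=e^{-i\theta}\langle D_\vp x,D_\vp y\rangle$ gives
\[
\|D_\vp f_\theta\|^2=\|D_\vp x\|^2+\|D_\vp y\|^2+2\Re\!\big(e^{-i\theta}c\big),
\]
so that, writing $\gamma:=\|D_\vp y\|^2-\|y\|^2$, we get $G(\theta)=\alpha+\gamma+2\Re(e^{-i\theta}c)$.

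The only genuinely new input is Lemma~\ref{lem:quad-identities}(ii) applied to the given extremal $f=x\oplus y$: it supplies $\gamma=\alpha$ and $\Re c=\beta=-\alpha$ (and $\alpha\le 0$). Substituting $\gamma=\alpha$ collapses the above to $G(\theta)=2\alpha+2\Re(e^{-i\theta}c)$. Since $D_\vp=P_{\clk_u^\perp}M_\vp|_{\clk_u^\perp}$ and $|\vp|=1$ a.e., we have $\|D_\vp v\|\le\|v\|$ for all $v\in\clk_u^\perp$, hence $G(\theta)\le 0$ for every $\theta$; consequently $f_\theta$ is extremal if and only if $G(\theta)=0$, i.e.\ if and only if $\Re(e^{-i\theta}c)=-\alpha$. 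This is (i). Assertion (ii) is then immediate: $\theta=0$ gives $f_0=f$, which is extremal by hypothesis — equivalently, the identity $\Re c=-\alpha$ from Lemma~\ref{lem:quad-identities}(ii) already certifies that $\theta=0$ solves the condition in (i). (Alternatively, the image of $\theta\mapsto\Re(e^{-i\theta}c)$ is $[-|c|,|c|]$, which contains $-\alpha=\Re c$.)

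For (iii): if some $\theta$ achieves both $\Re(e^{-i\theta}c)=0$ and extremality, then by (i) extremality forces $\Re(e^{-i\theta}c)=-\alpha$, whence $\alpha=0$. Conversely, if $\alpha=0$ then the extremality condition of (i) reads exactly $\Re(e^{-i\theta}c)=0$, which is solvable (any $\theta$ when $c=0$; otherwise $\theta=\arg c\pm\pi/2$). The parenthetical characterisation is just unwinding definitions: $\alpha=0\iff\|D_\vp x\|^2=\|x\|^2\iff\|D_\vp x\|=\|x\|$, and since $\gamma=\alpha$ by Lemma~\ref{lem:quad-identities}(ii), also $\alpha=0\iff\gamma=0\iff\|D_\vp y\|=\|y\|$.

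I do not anticipate a real obstacle here: all the geometric rigidity (the perfect–square collapse, the relation $\Re c=-\alpha$, the sign $\alpha\le 0$) is already packaged in Lemma~\ref{lem:quad-identities}, and the corollary is essentially that information reorganised under a phase rotation of the coanalytic summand. The only points needing care are bookkeeping: getting the conjugation right when extracting $e^{i\theta}$ from the inner product, using $x\perp y$ (not unitarity of $M_\vp$) for the rotation–invariance of $\|f_\theta\|$, and invoking $\|D_\vp v\|\le\|v\|$ rather than re-deriving $G(\theta)\le 0$ from scratch.
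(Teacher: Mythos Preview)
Your proposal is correct and follows essentially the same route as the paper: both reduce everything to the quantity $\|D_\vp f_\theta\|^2-\|f_\theta\|^2$, expand it as $\alpha+\gamma+2\Re(e^{-i\theta}c)$, and then invoke Lemma~\ref{lem:quad-identities} for $\gamma=\alpha$ and $\Re c=-\alpha$. The only cosmetic difference is that the paper carries along an auxiliary real parameter $t$ (writing $F_\theta(t)$ and then evaluating at $t=1$), whereas you go straight to the single function $G(\theta)=F_\theta(1)$; your version is slightly more economical but the substance is identical.
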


\begin{proof}
\textbf{Proof of (i):}
By Lemma~\ref{lem:quad-identities} applied to $f=x\oplus y$, if we write
\[
a:=\vp x,\quad b:=\vp y,
\]
then the function
\[
F(t) := \|P_{\clk_u^\perp}(ta+b)\|^2-\|ta+b\|^2
\]
is a real quadratic of the form
\[
F(t)=\alpha t^2 + 2\beta t + \gamma,
\]
where $\quad
\alpha=\|D_\vp x\|^2-\|x\|^2 \le 0,\ \beta=\Re\langle D_\vp x, D_\vp y\rangle,\,
\gamma=\|D_\vp y\|^2-\|y\|^2 \le 0.$
Moreover, extremality of $f$ yields $F(t)\le 0$ for all $t\in\R$ and $F(1)=0$, hence necessarily
\[
\beta=-\alpha,\qquad \gamma=\alpha,\qquad
F(t)=\alpha\,(t-1)^2\ \ (\le 0).
\]

Fix $\theta\in\R$ and set $f_\theta:=x\oplus e^{i\theta}y$.
Define
\[
F_\theta(t) := \|P_{\clk_u^\perp}\big(\vp (t x + e^{i\theta}y)\big)\|^2-\|t x+e^{i\theta}y\|^2
 = \|P_{\clk_u^\perp}(ta+e^{i\theta}b)\|^2-\|ta+e^{i\theta}b\|^2 .
\]
Expanding,
\begin{align*}
\|P_{\clk_u^\perp}(ta+e^{i\theta}b)\|^2
&= \|P_{\clk_u^\perp}a\|^2\,t^2 + 2\,\Re\langle P_{\clk_u^\perp}a,\,P_{\clk_u^\perp}(e^{i\theta}b)\rangle\,t + \|P_{\clk_u^\perp}(e^{i\theta}b)\|^2 \\
&= \|D_\vp x\|^2\,t^2 + 2\,\Re\!\big(e^{-i\theta}\langle D_\vp x, D_\vp y\rangle\big)\,t
   + \|D_\vp y\|^2 ,
\end{align*}
and since $a\perp b$ in $L^2$, $\|ta+e^{i\theta}b\|^2=t^2\|x\|^2+\|y\|^2$.
Therefore,
\[
F_\theta(t)
= \underbrace{\big(\|D_\vp x\|^2-\|x\|^2\big)}_{=\alpha} t^2
+ 2\,\underbrace{\Re\!\big(e^{-i\theta}\langle D_\vp x, D_\vp y\rangle\big)}_{=\ \Re(e^{-i\theta}c)}\,t
+ \underbrace{\big(\|D_\vp y\|^2-\|y\|^2\big)}_{=\ \gamma}.
\]
Hence
\begin{equation}\label{eq:Ftheta}
F_\theta(t)=\alpha t^2 + 2\,\Re(e^{-i\theta}c)\,t + \gamma.
\end{equation}
Evaluating at $t=1$ gives
\begin{equation}\label{eq:Ftheta1}
F_\theta(1)=\alpha + 2\,\Re(e^{-i\theta}c) + \gamma.
\end{equation}
Using $\gamma=\alpha$, we obtain the key formula
\begin{equation}\label{eq:key-formula}
F_\theta(1) = 2\big(\alpha+\Re(e^{-i\theta}c)\big).
\end{equation}

By definition, $f_\theta$ is extremal for $D_\vp$ if and only if
\[
\|D_\vp f_\theta\|=\|f_\theta\|\quad\text{if and only if}\quad F_\theta(1)=0.
\]
Using \eqref{eq:key-formula}, this is equivalent to
\[
2\big(\alpha+\Re(e^{-i\theta}c)\big)=0
\quad\text{if and only if}\quad
\Re(e^{-i\theta}c)=-\alpha,
\]
and this completes the proof.

\textbf{Proof of (ii):}
Write $c=|c|e^{i\psi}$. Then
$$
\Re(e^{-i\theta}c)=|c|\,\cos(\psi-\theta).
$$
Since $f$ is extremal, by Lemma \ref{lem:quad-identities}, we get $-\alpha=\Re (c)=|c|\cos\psi$.
We need to solve
$$
|c|\cos(\psi-\theta)=-\alpha.
$$
Because $|-\alpha|=|\Re (c)|\le |c|$, there always exists $\theta$ with
$\cos(\psi-\theta)=-\alpha/|c|$ (indeed $\theta=0$ already works, then
$\Re(e^{i\theta}c)=\Re(c) =-\alpha$). Hence at least one $\theta$ preserves extremality.

\textbf{Proof of (iii):}
Suppose there is $\theta$ with
\[
\Re(e^{-i\theta}c)=0
\quad\text{and}\quad
f_\theta \text{ is extremal}.
\]
Then by (i),
\[
0=\Re(e^{-i\theta}c)=-\alpha \quad\Rightarrow\quad \alpha=0.
\]
Conversely, if $\alpha=0$ then $F_\theta(1)=2\,\Re(e^{-i\theta}c)$ by \eqref{eq:key-formula},
so choosing $\theta=\arg c + \frac{\pi}{2}$ gives $\Re(e^{-i\theta}c)=0$ and hence $F_\theta(1)=0$, that is, $f_\theta$ remains extremal.
\end{proof}

\begin{proposition}
    Let $u$ be a nonconstant inner function and $\vp\in L^\infty$ with $|\vp|=1$ a.e. on $\T$. Define \[
M_{+} = \{h\in H^2:P_{\clk_u}(\vp uh)=0\},\qquad
M_{- }= \{g\in H^2_{-}:P_{\clk_u}(\vp g)=0\}.
\]
If $D_\vp$ attains its norm on $\clk_u^\perp$, then the following assertions hold:
\begin{enumerate}
    \item Either $M_{+}\neq \{0\}$ or $M_{-}\neq\{0\}$;

    \item If $h_0\in M_{+}$, then for any $h\in H^2$, we have \[ P_{+}(Fh)=T_F(h)\in \overline{\operatorname{span}}\Big(uH^2\cup\{P_{+}(z^N(Fk_0)_-)\}_{N\ge0}\Big),\] where $F:=\vp\,u\,\psi_{+}\in L^\infty $ and $\psi_{+},\, k_0$ are the inner and outer factor of $h_0$, respectively.
\end{enumerate}
\end{proposition}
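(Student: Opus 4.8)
The plan is to treat the two assertions separately, deducing \textup{(1)} from the Toeplitz--Hankel equations already recorded for extremal vectors of $D_\vp D_\vp^*$, and \textup{(2)} from a density argument based on the outer factor of $h_0$. For \textup{(1)}: since $\|\vp\|_\infty=1$ and $D_\vp\in\na$, we have $\|D_\vp\|=1$, so by Theorem~\ref{thm:NA_equiv} the value $1$ is an eigenvalue of $D_\vp D_\vp^*$; fix a nonzero $h=ug_{+}\oplus f_{-}\in uH^2\oplus H^2_{-}$ with $D_\vp D_\vp^*(ug_{+}\oplus f_{-})=ug_{+}\oplus f_{-}$. By the preceding proposition, $g_{+}$ satisfies \eqref{positive part} and $Vf_{-}$ satisfies the simplified coanalytic equation $\big(T_\vp^* T_\vp-T_{u\vp}^* T_{u\vp}\big)Vf_{-}=0$, while $h\neq0$ forces $g_{+}\neq0$ or $f_{-}\neq0$. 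If $f_{-}\neq0$, put $k:=Vf_{-}\in H^2\setminus\{0\}$; pairing the coanalytic equation with $k$ gives $\|T_\vp k\|=\|T_{u\vp}k\|$, and the Pythagorean identities $\|k\|^2=\|T_\vp k\|^2+\|H_\vp k\|^2=\|T_{u\vp}k\|^2+\|H_{u\vp}k\|^2$ (valid since $\vp$ and $u\vp$ are unimodular) then force $\|H_\vp k\|=\|H_{u\vp}k\|$. Using observation \textup{(i)} of Section~2 in the form $\|D_\vp(uk)\|^2=\|H_{u\vp}k\|^2+\|T_\vp k\|^2$, the relation $\|H_\vp k\|=\|H_{u\vp}k\|$ just obtained, and $\|uk\|^2=\|k\|^2=\|T_\vp k\|^2+\|H_\vp k\|^2$, we conclude $\|D_\vp(uk)\|=\|uk\|$, so Lemma~\ref{lem:proj-id}\,(i) gives $k\in M_{+}$ and hence $M_{+}\neq\{0\}$.

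If instead $g_{+}\neq0$, pairing \eqref{positive part} with $g_{+}$ gives $\|T_{\bar\vp}g_{+}\|=\|T_{u\bar\vp}g_{+}\|$ (using $T_\vp^*=T_{\bar\vp}$ and $T_{\bar u\vp}^*=T_{u\bar\vp}$); the Pythagorean identities for the unimodular symbols $\bar\vp$ and $u\bar\vp$ yield $\|H_{\bar\vp}g_{+}\|=\|H_{u\bar\vp}g_{+}\|$, and the same computation as above (with $\bar\vp$ in place of $\vp$) gives $\|D_{\bar\vp}(u g_{+})\|=\|u g_{+}\|$. Finally, the $C_u$-symmetry $D_{\bar\vp}=D_\vp^*=C_uD_\vp C_u$ of \eqref{eq:Cu-symm-cor}, the relation $C_u(uH^2)=H^2_{-}$ of \eqref{eq:Cu-swap-cor}, and the facts that $C_u$ is isometric with $C_u^2=I$ give $\|D_\vp(C_u(u g_{+}))\|=\|C_u(u g_{+})\|$ with $0\neq C_u(u g_{+})\in H^2_{-}$; by Lemma~\ref{lem:proj-id}\,(ii) this means $C_u(u g_{+})\in M_{-}$, so $M_{-}\neq\{0\}$. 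This proves \textup{(1)}.

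For \textup{(2)}, fix a nonzero $h_0\in M_{+}$ and write $h_0=\psi_{+}k_0$ with $\psi_{+}$ inner and $k_0$ outer, so that $\vp u h_0=(\vp u\psi_{+})k_0=Fk_0$; from the definition of $M_{+}$ and the equivalence $P_{\clk_u}(f)=0\iff P_{+}f\in uH^2$ established in the proof of Lemma~\ref{lem:proj-id}, we get $P_{+}(Fk_0)\in uH^2$. Split $Fk_0=P_{+}(Fk_0)+(Fk_0)_{-}$ with $(Fk_0)_{-}:=(I-P_{+})(Fk_0)\in H^2_{-}$. The key point is that, $k_0$ being outer, the linear set $\{k_0 p:\ p\text{ a polynomial}\}$ is dense in $H^2$: its closure is a closed subspace invariant under multiplication by $z$, hence equals $\theta H^2$ for some inner $\theta$ by Beurling's theorem, and $k_0\in\theta H^2$ forces $\theta$ to be constant. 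Since $M_F$ and $P_{+}$ are bounded, for every $h\in H^2$ we have $T_F h=P_{+}(Fh)\in\overline{\operatorname{span}}\{P_{+}(Fk_0 p):p\text{ polynomial}\}$. Finally, for $p(z)=\sum_{n=0}^{d}a_n z^{n}$,
\[
P_{+}(Fk_0 p)=\sum_{n=0}^{d}a_n\,z^{n}P_{+}(Fk_0)+\sum_{n=0}^{d}a_n\,P_{+}\!\big(z^{n}(Fk_0)_{-}\big),
\]
where the first sum lies in $uH^2$ (because $P_{+}(Fk_0)\in uH^2$ and $z^{n}uH^2\subseteq uH^2$) and the second lies in $\operatorname{span}\{P_{+}(z^{N}(Fk_0)_{-}):N\ge0\}$. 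Hence every $P_{+}(Fk_0 p)$, and therefore $T_F h$, lies in $\overline{\operatorname{span}}\big(uH^2\cup\{P_{+}(z^{N}(Fk_0)_{-})\}_{N\ge0}\big)$, as claimed.

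The substantive difficulty is \textup{(1)}: one must correctly read off the scalar component equations for $g_{+}$ and $Vf_{-}$ from $D_\vp D_\vp^* h=h$, keep straight which unimodular symbol ($\vp$, $u\vp$, or their conjugates) enters each Pythagorean identity, and route the case $g_{+}\neq0$ through the $C_u$-symmetry so as to produce an element of $M_{-}$ rather than $M_{+}$. Assertion \textup{(2)} is then a routine density-and-bookkeeping argument once the outerness of $k_0$ has been used.
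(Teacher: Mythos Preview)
Your treatment of \textup{(ii)} is correct and essentially identical to the paper's: both factor $h_0=\psi_{+}k_0$, deduce $P_{+}(Fk_0)\in uH^2$ from Lemma~\ref{lem:proj-id}, use density of $k_0\cdot(\text{polynomials})$ in $H^2$ by outerness of $k_0$, and split $P_{+}(z^{N}Fk_0)=z^{N}P_{+}(Fk_0)+P_{+}\big(z^{N}(Fk_0)_-\big)$ to land in the stated closed span.

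For \textup{(i)}, your route differs from the paper's (which argues by contradiction: assuming $M_{+}=M_{-}=\{0\}$, an extremal $f=x\oplus y$ is shown to violate $\|D_\vp f\|=\|f\|$). Your idea of feeding an eigenvector of $D_\vp D_\vp^*$ into the Toeplitz--Hankel proposition is attractive, but there is a genuine gap. The proof of that proposition in the paper derives \eqref{positive part} only \emph{after specializing} to $h=ug_{+}$, and the coanalytic identity only after specializing to $h=f_{-}$. For a \emph{mixed} eigenvector $h=ug_{+}\oplus f_{-}$ with both components nonzero, carrying the paper's computation of the $uH^2$-component of $D_\vp D_\vp^*h=h$ through without specializing gives
\[
\big[T_\vp T_\vp^{*}-T_{\bar u\vp}T_{\bar u\vp}^{*}\big]g_{+}
=\big[T_{\bar u\vp}H_\vp^{*}-T_\vp H_{u\vp}^{*}\big]f_{-},
\]
and the cross term on the right is not forced to vanish (an analogous cross term appears in the $H^2_{-}$-component). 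Consequently you cannot legitimately pair \eqref{positive part} with $g_{+}$, nor the simplified coanalytic equation with $k=Vf_{-}$, to conclude $\|T_{\bar\vp}g_{+}\|=\|T_{u\bar\vp}g_{+}\|$ or $\|T_\vp k\|=\|T_{u\vp}k\|$ when both components are present. Since norm attainment of $D_\vp$ only supplies \emph{some} nonzero eigenvector of $D_\vp D_\vp^{*}$ at $1$---and that eigenvector may well be genuinely mixed---your case split ``$f_{-}\neq 0$'' versus ``$g_{+}\neq 0$'' does not cover the general situation. To make this approach work you would need either to show the cross terms vanish under the eigenvalue condition, or to produce a pure-component eigenvector; neither step is supplied.
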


\begin{proof}
\textbf{Proof of (i) :}
Assume that $D_\vp$ attains its norm. Then
$\|D_\vp f\|=\|f\|$ for some $0\neq f=x\oplus y\in \clk_u^\perp$.

By Lemma~\ref{lem:quad-identities}, we have
$$
\|D_\vp x\|^2-\|x\|^2=\|D_\vp y\|^2-\|y\|^2=\alpha\leq 0,
\qquad
\Re\langle D_\vp x,D_\vp y\rangle=-\alpha.
$$

Define the maps
$$
T_{+}:H^2\to \clk_u, \text{ by }\quad T_{+}(h):=P_{\clk_u}(\vp u h);
$$
$$
T_{-}:H^2_{-}\to \clk_u, \text{ by }\quad T_{-}(g):=P_{\clk_u}(\vp g).
$$
Then
$$
M_{+}  =  N(T_{+}) \quad\text{and}\quad M_{- } =  N(T_{-}).
$$
Since $T_\pm$ are bounded, their kernels are closed; hence $M_{+}$ and $M_{-}$ are closed subspaces.

\textbf{Claim:} It is impossible that $M_{+}=\{0\}=M_{-}$.

If $M_{+}=\{0\}=M_{-}$, then for every nonzero $x\in uH^2$ and $y\in H^2_{-}$,
$$
P_{\clk_u}(\vp x)\neq 0,\qquad P_{\clk_u}(\vp y)\neq 0.
$$
Define
$$
\delta_x=\frac{\|P_{\clk_u}(\vp x)\|}{\|x\|},\qquad
\delta_y=\frac{\|P_{\clk_u}(\vp y)\|}{\|y\|}.
$$
Then $\delta_x,\delta_y>0$.
So we have
$$
\|D_\vp x\|^2=\|P_{\clk_u^\perp}(\vp x)\|=\|x\|^2-\|P_{\clk_u}(\vp x)\|^2
= (1-\delta_x^2)\|x\|^2,
$$
and similarly
$$
\|D_\vp y\|^2= (1-\delta_y^2)\|y\|^2.
$$
Set $\rho=\max\{\sqrt{1-\delta_x^2},\sqrt{1-\delta_y^2}\}<1$.
For $f=x\oplus y$,
$$
\|D_\vp f\|^2\le \|D_\vp x\|^2+\|D_\vp y\|^2
\le \rho^2(\|x\|^2+\|y\|^2)
\le \rho^2\|f\|^2.
$$
Thus $\|D_\vp\|\le \rho<1$, contradicting norm attainment.
Hence at least one of $M_{+}$, $M_{-}$ is nontrivial.

\textbf{Proof of (ii) :}
Let $M_{+}\neq\{0\}$, and $0\neq h_0\in M_{+}$ and $h_0=\psi_{+} k_0$ be the inner–outer factorization, where $\psi_{+}$ is inner and $k_0$ is outer.
Since $h_0\in M_{+}$, using Lemma \ref{lem:proj-id}, we have
$$
P_{\clk_u}(\vp u h_0)=0
\quad\text{if and only if }\quad
P_{+}(\vp u h_0)\in uH^2 .
$$
Define
$$
F:=\vp u \psi_{+}\in L^\infty .
$$
Then 
\begin{equation}\label{eq:Fk0}
P_{+}(F k_0)\in uH^2 .
\end{equation}

Let $\clp$ be the space of all analytic polynomials. Define the bounded operator
$$
T_F:H^2\to H^2
$$
by $$ T_F(h):=P_{+}(Fh).$$
From \eqref{eq:Fk0}, we get
\begin{equation}\label{eq:*}
   T_F(k_0)\in uH^2.  
\end{equation}

Since $k_0$ is outer, the set
$$
\cld:=\{\,k_0 p:\ p\in\clp\,\}
$$
is dense in $H^2$.  
For the monomial $p(z)=z^N$, \ $N\ge 0$,
$$
T_F(k_0 z^N)
= P_{+}(z^N\,F k_0).
$$
Write the Fourier decomposition $Fk_0=(Fk_0)_{+}\; +\; (Fk_0)_{-}$ with
$(Fk_0)_{+}=P_{+}(Fk_0)\in H^2$ and $(Fk_0)_{-}=(I-P_{+})(Fk_0)\in H^2_{-}$. Then
\begin{equation}\label{eq:split}
T_F(k_0 z^N)
= z^N (Fk_0)_{+}  +  P_{+}(z^N(Fk_0)_-).
\end{equation}

Because of \eqref{eq:*},
$$
(Fk_0)_+ = T_F(k_0)\in uH^2,
$$
and since $uH^2$ is $M_z$–invariant,
$$
z^N (Fk_0)_+ \in uH^2.
$$

The second term $P_{+}(z^N(Fk_0)_-)$ is analytic, but not necessarily in $uH^2$.
Thus each $T_F(k_0 z^N)$ lies in the closed subspace
\begin{equation}\label{eq:uH2-plus-span}
    uH^2 \cup
\operatorname{span}\big\{\,P_{+}(z^N(Fk_0)_-):N\ge0\,\big\}.
\end{equation}

By linearity the same holds for all $T_F(k_0p)$, $p\in\clp$.

Since $T_F$ is bounded and $\cld=k_0\clp$ is dense in $H^2$, for any
$h\in H^2$ there exist a sequence of polynomials $(p_m)$ such that $k_0p_m\to h$ in $H^2$. By the continuity of $T_F$, we get
$$
T_F(k_0p_m)\to T_F(h)=P_{+}(Fh).
$$
Since each $T_F(k_0p_m)$ lies in the space \eqref{eq:uH2-plus-span},
we obtain
\begin{equation}\label{eq:club}
    P_{+}(Fh)=T_F(h)\in 
\overline{\operatorname{span}}\Big(uH^2\cup\{P_{+}(z^N(Fk_0)_-)\}_{N\ge0}\Big)
\subset H^2.
\end{equation}

This completes the proof.

\end{proof}

\begin{remark}
    From Lemma \ref{lem:proj-id}, we can say $M_{+}=\{h\in H^2 : \|D_\vp uh\|=\|uh\|\}$. On the other hand, $M_{+}$ is the closed subspace of $H^2$. If $\vp\in H^\infty$, it can be shown that $M_{+}$ is invariant for $M_z$. Then by Beurling theorem \cite{Beurling1948}, $M_{+}=\psi H^2$ for some inner $\psi$.  
\end{remark}

\subsection{Norm attainment on the analytic component $uH^2$}
We begin with a fundamental lemma, which is crucial for the proof of the main result of this section.
\begin{lemma}\label{lem:outer-test}
Let $F\in L^\infty$ satisfy $|F|=1$ a.e. on $\T$, and let $k_0\in H^2$
be a nonzero outer function. If
\[
F k_0 \in H^2,
\]
then $F$ is inner.
\end{lemma}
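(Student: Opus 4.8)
The plan is to prove directly that $F\in H^2$; combined with $|F|=1$ a.e.\ on $\T$ this gives $F\in H^2\cap L^\infty=H^\infty$ with $\|F\|_\infty=1$, hence $F$ is inner. The engine is the unitarity of the multiplication operator $M_F$ on $L^2$ together with the cyclicity of the outer function $k_0$.

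\textbf{Two preliminary facts.} First, since $|F|=1$ a.e.\ on $\T$, the operator $M_F\colon L^2\to L^2$ is unitary; in particular it is bounded, and a bounded operator maps the closure of a set into the closure of its image. Second, since $k_0$ is outer, the set $\cld=\{k_0 p:\ p\in\clp\}$ of analytic--polynomial multiples of $k_0$ is dense in $H^2$; this is precisely the cyclicity property of outer functions already used earlier in the excerpt.

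\textbf{Main step.} For every $p\in\clp$ we have $M_F(k_0 p)=(Fk_0)\,p$, and since $Fk_0\in H^2$ by hypothesis and $p\in H^\infty$, this product lies in $H^2$. Thus $M_F$ carries the dense subset $\cld$ of $H^2$ into $H^2$, and by continuity and closedness,
\[
M_F(H^2)=M_F(\overline{\cld})\subseteq\overline{M_F(\cld)}\subseteq\overline{H^2}=H^2.
\]
Applying this to the constant function $1\in H^2$ yields $F=M_F(1)\in H^2$. Since also $F\in L^\infty$, we get $F\in H^2\cap L^\infty=H^\infty$ with unimodular boundary values a.e., so $F$ is inner.

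\textbf{Expected difficulty.} There is no genuine obstacle here; the two points that need care are the invocation of the density of $k_0\clp$ in $H^2$ (the defining property of outer functions; see \cite{Duren} or \cite{Garnett}) and the elementary observation that $|F|=1$ a.e.\ makes $M_F$ isometric on $L^2$, which is what legitimizes passing to closures. An alternative route via the inner--outer factorization $Fk_0=g_ig_o$ --- comparing boundary moduli to conclude that $g_o$ and $k_0$ are outer functions with the same modulus, hence equal up to a unimodular constant, whence $F=\lambda g_i$ --- also works, but the operator-theoretic argument above is shorter and better aligned with the techniques of this paper.
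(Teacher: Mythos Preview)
Your proof is correct. The paper takes the alternative route you yourself sketch at the end: it writes $G=Fk_0$ with inner--outer factorization $G=IO$, observes that $|O|=|G|=|k_0|$ a.e.\ on $\T$, invokes the uniqueness (up to a unimodular constant) of an outer function with prescribed modulus to get $O=ck_0$, and concludes $F=cI$. Your argument instead exploits the cyclicity of the outer function $k_0$ together with the continuity of $M_F$ to push $F$ into $H^2$ directly, bypassing any factorization of $Fk_0$. Both proofs are short; the paper's approach leans on a single structural fact about outer functions (equal modulus implies equal up to a constant), whereas yours uses density of $k_0\clp$ and a closure argument, which indeed matches the density technique already deployed elsewhere in the paper. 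One small remark: you invoke unitarity of $M_F$, but all you actually use is boundedness (to pass to closures), so the hypothesis $|F|=1$ is only needed at the very last step to declare $F$ inner.
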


\begin{proof}
Let $G = Fk_0\in H^2$. Then $G$ admits the inner-outer factorization
$G = I O$, where $I$ is inner and $O$ is outer. Since $|F|=1$ a.e. on $\T$, we get
$$
|G| = |F k_0| = |k_0| \quad \text{a.e. on } \T.
$$   
But also $|G| = |IO| = |O|$ a.e., so
\[
|O| = |k_0|\quad\text{a.e. on }\T.
\]

By \cite[Corollary 6.23]{Douglas1998}, it follows that $O = c\,k_0$ for some $c\in\T$.  
Therefore,
\[
G = IO = I(c k_0) = c I k_0.
\]
Since $G = F k_0$ and $k_0$ has no zeros in $\D$ and nonzero boundary
values a.e., division yields
\[
F = c I \in H^\infty,
\]
which is inner.
\end{proof}

\begin{theorem}\label{thm:analytic-Dphi}
Let $u$ be a nonconstant inner function and let $\vp\in L^\infty$ with
$\|\vp\|_\infty=1$. Then the following are equivalent:

\begin{enumerate}
\item\label{analytic-Dphi-1} There exist inner functions $\psi_{+}$ and $\chi_{+}$ such that
\begin{equation}\label{eq:analytic-factor-Dphi}
\vp=\overline{u}\,\overline{\psi}_{+} \chi_{+};
\end{equation}

\item\label{analytic-Dphi-2} There exists a nonzero $h_0\in H^2$ such that
\[
f_0 := u h_0 \oplus 0 \in \clk_u^\perp
\]
is extremal for $D_\vp$, that is
\[
\|D_\vp f_0\| = \|f_0\|,
\]
and moreover
\begin{equation}\label{eq:extracondition 1}
    \vp u h_0 \in H^2.
\end{equation}
\end{enumerate}

\end{theorem}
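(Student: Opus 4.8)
The plan is to establish the two implications of Theorem~\ref{thm:analytic-Dphi} separately. The common preliminary move is to invoke Proposition~\ref{componentwise unimodular}: since $\|\vp\|_\infty=1$ and $D_\vp$ attains its norm on a nonzero vector of $uH^2$, we get $|\vp|=1$ a.e.\ on $\T$, which legitimizes the use of Proposition~\ref{lem:extremal-set} and Lemma~\ref{lem:proj-id}. The direction (i) $\Rightarrow$ (ii) is a short explicit construction, while (ii) $\Rightarrow$ (i) is the real content and will rest on the inner--outer factorization of $h_0$ together with the outer-function rigidity of Lemma~\ref{lem:outer-test}. I expect the one genuinely delicate point to be the step of the converse that upgrades the operator identity $\|D_\vp(uh_0)\|=\|uh_0\|$ to the pointwise membership $\vp h_0\in H^2$; this is exactly where the auxiliary hypothesis $\vp u h_0\in H^2$ (condition~\eqref{eq:extracondition 1}) is indispensable.

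For (i) $\Rightarrow$ (ii), given $\vp=\overline u\,\overline\psi_+\chi_+$ with $\psi_+,\chi_+$ inner, I would test on $h_0:=u\psi_+\in H^2$, which is nonzero. Using $|u|=|\psi_+|=1$ a.e.\ on $\T$, a one-line computation gives $\vp\,u\,h_0=\overline u\,\overline\psi_+\chi_+\cdot u\cdot u\psi_+=u\chi_+\in uH^2\subseteq\clk_u^\perp$, so condition~\eqref{eq:extracondition 1} holds; moreover $(I-P_{\clk_u})(\vp u h_0)=u\chi_+$ because $u\chi_+\in\clk_u^\perp$, whence $\|D_\vp(uh_0)\|=\|u\chi_+\|=1=\|uh_0\|=\|D_\vp\|\,\|uh_0\|$. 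Thus $f_0=uh_0\oplus 0$ is extremal for $D_\vp$.

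For (ii) $\Rightarrow$ (i), I would argue as follows. Proposition~\ref{componentwise unimodular} gives $|\vp|=1$ a.e. By Lemma~\ref{lem:proj-id}(i), extremality of $uh_0$ is equivalent to $P_{+}(\vp u h_0)\in uH^2$; since the hypothesis $\vp u h_0\in H^2$ forces $P_{+}(\vp u h_0)=\vp u h_0$, this yields $\vp u h_0\in uH^2$ and hence $\vp h_0=\overline u(\vp u h_0)\in H^2$. Writing the inner--outer factorization $h_0=\psi_+k_0$ with $\psi_+$ inner and $k_0\in H^2$ a nonzero outer function, one has $\vp\psi_+\in L^\infty$ with $|\vp\psi_+|=1$ a.e.\ and $(\vp\psi_+)k_0=\vp h_0\in H^2$, so Lemma~\ref{lem:outer-test} shows that $\vp\psi_+$ is inner; call it $\eta$. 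Then $\vp=\overline\psi_+\eta$ a.e.\ on $\T$, and inserting the unimodular factor $\overline u\,u$ gives $\vp=\overline u\,\overline\psi_+\,(u\eta)$, so that $\chi_+:=u\eta$ (inner) delivers the factorization~\eqref{eq:analytic-factor-Dphi}.

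The obstacle, as flagged above, is entirely localized in the converse: extremality alone only places $\vp u h_0$ in $\clk_u^\perp=uH^2\oplus H^2_-$, so without condition~\eqref{eq:extracondition 1} the product $\vp h_0$ could carry a nonzero coanalytic tail and Lemma~\ref{lem:outer-test} would not apply. Once $\vp h_0\in H^2$ is secured the remainder is routine; in fact the argument shows that the inner part of an admissible extremal direction $h_0$ is forced to coincide with the inner factor $\psi_+$ appearing in the symbol, the final $\overline u\,u$ insertion being a purely cosmetic normalization to the form stated in~\eqref{eq:analytic-factor-Dphi}.
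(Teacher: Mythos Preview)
Your proof is correct and follows essentially the same route as the paper: both directions hinge on the inner--outer factorization of $h_0$ together with Lemma~\ref{lem:outer-test}, and unimodularity of $\vp$ via Proposition~\ref{componentwise unimodular}. Your choice $h_0=u\psi_+$ in (i)$\Rightarrow$(ii) is in fact cleaner than the paper's (which passes through a $\gcd$ decomposition $u=du_1$, $\chi_+=d\chi_1$ to parametrize the full extremal set), and in (ii)$\Rightarrow$(i) the paper applies Lemma~\ref{lem:outer-test} directly to $F=\vp u\psi_+$ rather than first deducing $\vp h_0\in H^2$---but this is a cosmetic difference yielding the same $\chi_+$.
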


\begin{proof}

\textbf{(\ref{analytic-Dphi-1}) $\Rightarrow$ (\ref{analytic-Dphi-2}):} Let there exist inner functions $\psi_{+}$ and $\chi_{+}$ such that \eqref{eq:analytic-factor-Dphi} holds. Then we obtain
$$
\vp u\psi_{+} = \chi_{+}\in H^\infty.
$$
Write
$$
u = d\,u_1,\qquad \chi_{+} = d\,\chi_1 \text{ such that } \gcd(u_1,\chi_1)=1.
$$

Fix a nonzero $h\in H^2$ and set
$$
h_0 := \psi_{+}u_1h,\qquad f_0 := u h_0 \oplus 0 \in \clk_u^\perp.
$$
Then
$$
\vp uh_0=\vp u\psi_{+}u_1 h=\chi_{+}u_1 h \in H^2.
$$
Since $f_0\in \clk_u^\perp$,
$$
D_\vp f_0
 = P_{\clk_u^\perp}(\vp f_0)
 = P_{\clk_u^\perp}(\vp u h_0)
 = P_{\clk_u^\perp}(\chi_{+}u_1h).
$$
Now, $\chi_{+}u_1h=d\chi_1 u_1h=(du_1)\chi_1h=u\chi_1h \in uH^2$.
Thus we get
$$
D_\vp f_0=\chi_{+}u_1h.
$$
Also, we have
$$
\|f_0\|=\|uh_0\|=\|h_0\|=\|\psi_{+}u_1h\|=\|h\|
$$
and
$$
\|D_\vp f_0\|=\|\chi_{+}u_1h\|=\|h\|.
$$
Therefore, $f_0$ is an extremal vector for $D_\vp$.

\textbf{(\ref{analytic-Dphi-2})$\Rightarrow$ (\ref{analytic-Dphi-1}).}

Assume that (\ref{analytic-Dphi-2}) holds. Then there exists $0\neq h_0\in H^2$ such that
$$
f_0 := u h_0 \oplus 0\in\clk_u^\perp,
\quad
\|D_\vp f_0\| = \|f_0\|
\text{  and }
\vp u h_0\in H^2.
$$

Write the inner--outer factorization
$$
h_0 = \psi_{+}k_0,
$$
where $\psi_{+}$ is inner and $k_0$ is outer.  
Define
$$
F := \vp u\psi_{+}.
$$
Since $\|\vp\|_\infty=1$, by Proposition \ref{componentwise unimodular}, $|\vp|=1$ a.e. on $\T$. Thus we have $|F|=1$ a.e. on $\T$.

Moreover
$$
F k_0 = \vp u\psi_{+}k_0
       = \vp u h_0 \in H^2.
$$
  
By Lemma~\ref{lem:outer-test}, it follows that $F$ is inner. Thus
there exists an inner function $\chi_{+}$ such that
$$
F = \chi_{+},
\quad\text{that is,}\quad
\vp=\overline{u}\,\overline{\psi}_{+} \chi_{+}.
$$
This is exactly the factorization \eqref{eq:analytic-factor-Dphi}, proving (\ref{analytic-Dphi-1}).
\end{proof}

\begin{remark}
    If (\ref{analytic-Dphi-1}) holds, taking any nonzero $k\in H^2$, set $f_0=u\psi_{+}k\in uH^2$. Then
    $$
\|D_\vp f_0 \|=\|P_{\clk_u^\perp}(\vp u\psi_{+}k)\|=\|P_{\clk_u^\perp}(\chi_{+}k)\|\leq \|\chi_{+}k\|=\|f_0\|.
    $$
By Lemma \ref{lem:proj-id}, $\|D_\vp f_0\|=\|f_0\|$  if and only if $P_{+}(\chi_{+}k)\in uH^2$ that is, $\chi_{+}k\in uH^2$. Let $d=\gcd(u,\chi_{+})$ and write $u=d\,u_1$, $\chi_{+}=d\,\chi_1$ with
$\gcd(u_1,\chi_1)=1$. Then
\[\chi_{+}k\in uH^2 \iff \chi_1 k\in u_1H^2.\]
Since $\gcd(u_1,\chi_1)=1$, we have 
$$\chi_1 k\in u_1 H^2 \iff k\in u_1 H^2.$$
Thus the analytic extremals of $D_\vp$ can be exactly given by
\[\cle_\vp^{+}=\{u\psi_{+}u_1h \oplus 0 : h\in H^2\}.\]
\end{remark}

\subsection{Norm attainment on the coanalytic component $H^2_{-}$}

\begin{theorem}\label{cor:coanalytic-Cu}
Let $u$ be a nonconstant inner function and let $\vp\in L^\infty$ with
$\|\vp\|_\infty=1$. Then the following are equivalent:

\begin{enumerate}
\item\label{coanalytic-Cu-1}
There exist inner functions $\psi_{-}$ and $\chi_{-}$ such that
\begin{equation}\label{eq:analytic-factor-barphi}
\vp=\,u\,\psi_{-} \overline{\chi}_{-};
\end{equation}

\item\label{coanalytic-Cu-2}
There exists a nonzero $y_0\in H^2_{-}$ such that
\[
f_0 := 0\oplus y_0 \in \clk_u^\perp
\]
is extremal for $D_\vp$, that is,
\[
\|D_\vp f_0\| = \|f_0\|,
\]
and moreover 
\begin{equation}\label{eq:extracondition 2}
\vp \bar{u}y_0 \in H^2_{-}.
\end{equation}
\end{enumerate}
\end{theorem}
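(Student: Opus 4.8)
The plan is to derive this coanalytic statement directly from the analytic Theorem~\ref{thm:analytic-Dphi}, applied to the conjugate symbol $\bar\vp$, by transporting everything along the canonical conjugation $C_u$. The structural facts I would use are: $C_u$ is an isometric involution with $C_u(uH^2)=H^2_{-}$ and $C_u(H^2_{-})=uH^2$ by \eqref{eq:Cu-swap-cor}; and, since $D_\vp$ is $C_u$-symmetric, combining \eqref{eq:Cu-symm-cor} with $D_\vp^*=D_{\bar\vp}$ gives $D_{\bar\vp}=C_u D_\vp C_u$, equivalently $D_\vp=C_u D_{\bar\vp} C_u$. Since $\|\bar\vp\|_\infty=\|\vp\|_\infty=1$, Theorem~\ref{thm:analytic-Dphi} applies to $\bar\vp$, and its clause (\ref{analytic-Dphi-1}) for $\bar\vp$ asserts the existence of inner $\psi_{+},\chi_{+}$ with $\bar\vp=\bar u\,\bar\psi_{+}\chi_{+}$, i.e.\ $\vp=u\,\psi_{+}\,\bar\chi_{+}$. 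Taking $\psi_{-}:=\psi_{+}$, $\chi_{-}:=\chi_{+}$ shows that (\ref{coanalytic-Cu-1}) is literally the same statement as Theorem~\ref{thm:analytic-Dphi}(\ref{analytic-Dphi-1}) for $\bar\vp$; thus it suffices to match the two extremal-vector clauses.

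First I would record the elementary identity $C_u(uh)=\bar z\,\bar h$ for $h\in H^2$, immediate from $C_u f=u\bar z\bar f$ and $|u|=1$ a.e.; hence $h\mapsto C_u(uh)$ is an isometric bijection of $H^2$ onto $H^2_{-}$, with inverse $y\mapsto \bar u\,C_u y$ (using $C_u^2=I$). Under the substitution $uh_0=C_u y_0$ --- so that $0\neq h_0\in H^2$ corresponds to $0\neq y_0\in H^2_{-}$ --- I claim that the two conditions in (\ref{coanalytic-Cu-2}) are exactly the $C_u$-images of the two conditions in Theorem~\ref{thm:analytic-Dphi}(\ref{analytic-Dphi-2}) written for $\bar\vp$. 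For extremality: from $D_\vp=C_u D_{\bar\vp}C_u$, $C_u y_0=uh_0$, and $\|C_u(\cdot)\|=\|\cdot\|$, one gets $\|D_\vp y_0\|=\|D_{\bar\vp}(uh_0)\|$ and $\|y_0\|=\|uh_0\|$, so $0\oplus y_0$ is extremal for $D_\vp$ iff $uh_0\oplus 0$ is extremal for $D_{\bar\vp}$. For the membership condition: since $y_0=\bar z\,\bar{h_0}$, we have $\vp\bar u\,y_0=\vp\bar u\bar z\,\bar{h_0}$; taking complex conjugates and using $\overline{H^2_{-}}=zH^2$ together with the fact that multiplication by $z$ maps $H^2$ bijectively onto $zH^2$, we obtain $\vp\bar u\,y_0\in H^2_{-}$ iff $\bar\vp\,u h_0\in H^2$, which is exactly \eqref{eq:extracondition 1} for the symbol $\bar\vp$. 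Combining these two equivalences with the factorization identification from the first paragraph yields (\ref{coanalytic-Cu-1})$\iff$(\ref{coanalytic-Cu-2}).

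I expect the only delicate point to be this last round of bookkeeping: checking that the auxiliary membership condition \eqref{eq:extracondition 2} is precisely the $C_u$-image of \eqref{eq:extracondition 1}, and keeping careful track of how $C_u$ interchanges the spaces $H^2$, $H^2_{-}$, $zH^2$, $uH^2$. Everything else is a formal transport, along the isometric involution $C_u$, of the already-established analytic theorem. An alternative but essentially equivalent route would instead use the anti-unitary $V$ with $VT_\vp=S_{\bar\vp}V$ together with the block representation of Lemma~\ref{block rptn} and Theorem~\ref{thm:NA_equiv}; the $C_u$ argument above is, however, the shortest and makes the correspondence between extremal vectors completely explicit.
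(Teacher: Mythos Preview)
Your proposal is correct and follows essentially the same approach as the paper: both arguments apply Theorem~\ref{thm:analytic-Dphi} to the conjugate symbol $\bar\vp$ and transport the analytic extremal data along the conjugation $C_u$, using $D_{\bar\vp}=C_uD_\vp C_u$ to match extremality and a direct conjugation/shift calculation to match the membership conditions \eqref{eq:extracondition 1} and \eqref{eq:extracondition 2}. Your presentation is slightly more streamlined in that you set up the $C_u$-dictionary once and handle both implications simultaneously, whereas the paper treats $(\ref{coanalytic-Cu-1})\Rightarrow(\ref{coanalytic-Cu-2})$ and $(\ref{coanalytic-Cu-2})\Rightarrow(\ref{coanalytic-Cu-1})$ separately, but the substance is the same.
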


\begin{proof}

\textbf{(\ref{coanalytic-Cu-1}) $\Rightarrow$ (\ref{coanalytic-Cu-2}).}
Let there exist inner functions $\psi_{-},\chi_{-}$ such that (\ref{eq:analytic-factor-barphi}) holds. By applying Theorem~\ref{thm:analytic-Dphi} to the symbol $\bar{\vp}$, the analytic extremal set for
$D_{\bar{\vp}}=D_\vp^*$ is
$$
\cle_{\bar{\vp}}^{(+)}
  = \{\,u\psi_{-}u_1h\oplus 0 : h\in H^2\,\}
  \subset uH^2\oplus\{0\},
$$
where $u=d u_1$, $\chi_{-}=d\chi_1$ with $\gcd(u_1,\chi_1)=1$.

In particular, for any nonzero $h\in H^2$,
$$
f_h := u\psi_{-}u_1h\oplus 0 \in \clk_u^\perp
$$
satisfies
$$
\|D_{\bar{\vp}} f_h\| = \|f_h\|.
$$
Moreover, by \eqref{eq:extracondition 1}, we get $\bar{\vp}f_h \in H^2$.
  
Define
$$
g_h := C_u(f_h) \in \clk_u^\perp.
$$
By \eqref{eq:Cu-swap-cor}, $f_h\in uH^2\oplus\{0\}$ implies
$g_h\in \{0\}\oplus H^2_{-}$, so we can write
$$
g_h = 0\oplus y_h
\qquad\text{with }y_h\in H^2_{-}.
$$
Now 
\begin{align*}
\bar{\vp}f_h \in H^2 &\iff V(\bar{\vp}f_h)\in H^2_{-}\\
&\iff \bar{z}\vp\bar{f_h}\in H^2_{-}\\
&\iff \bar{z}\vp \overline{C_u(g_h)}\in H^2_{-}\\
&\iff \bar{z}\vp \overline{u\bar{z}\bar{g_h}} \in H^2_{-}\\
&\iff \vp \bar{u}g_h \in H^2_{-}\\
&\iff \vp \bar{u}y_h \in H^2_{-}.
\end{align*}

Since $C_u$ is an isometry and $D_\vp^*=D_{\bar{\vp}}=C_u D_\vp C_u$, we
have
$$
\|D_\vp g_h\|
 = \|C_u D_\vp g_h\|
 =\|C_uD_\vp C_u f_h\|
 =\|D^*_\vp f_h\|
 =\|f_h\|=\|g_h\|.
$$
Thus each nonzero $g_h$ is an extremal for $D_\vp$, and in particular (\ref{coanalytic-Cu-2})
holds by choosing any nonzero $h$.

\medskip\noindent
\textbf{(\ref{coanalytic-Cu-2}) $\Rightarrow$ (\ref{coanalytic-Cu-1}).}
Assume that there exists $0\neq y_0\in H^2_{-}$ such that
$$
f_0 := 0\oplus y_0\in\clk_u^\perp
\text{ with }
\|D_\vp f_0\| = \|f_0\| \; \text{ and } \vp \bar{u}y_0 \in H^2_{-}.
$$

Write
$$
g_0 := C_u f_0 \in \clk_u^\perp.
$$
Since $f_0$ lies in $\{0\}\oplus H^2_{-}$, by \eqref{eq:Cu-swap-cor}, we get $g_0\in uH^2\oplus\{0\}$. Let $g_0 = u h_0\oplus 0$
for some nonzero $h_0\in H^2$.

Using $C_u$--symmetry \eqref{eq:Cu-symm-cor} and the fact that $C_u$ is an
isometry, we obtain
$$
\|D_\vp^* g_0\|
 = \|C_u D_\vp f_0\|
 = \|D_\vp f_0\|
 = \|f_0\|
 = \|C_u f_0\|
 = \|g_0\|.
$$
Thus $g_0$ is a norm attaining vector for $D_\vp^*$. But $D_\vp^* = D_{\bar{\vp}}$, so $g_0=u h_0\oplus 0$ is an analytic
extremal for $D_{\bar{\vp}}$. Since $\vp \bar{u}y_0 \in H^2_{-}$, using the maps $V$ and $C_u$, we get $\bar{\vp}g_0\in H^2$.

By Theorem~\ref{thm:analytic-Dphi}, (applied to the symbol $\bar{\vp}$), the existence of such an analytic extremal implies the existence of inner functions $\psi_{-}$ and
$\chi_{-}$ such that
$$
\bar{\vp}=\bar{u}\,\bar{\psi}_{-} \chi_{-}, \quad \text{that is,} \quad \vp=u\, \psi_{-}\overline{\chi}_{-}.
$$
This is exactly \eqref{eq:analytic-factor-barphi}, proving (\ref{coanalytic-Cu-1}).

\end{proof}

\begin{remark}
By the first part of the proof, the analytic extremal space for $D_{\bar{\vp}}$
is
\[
\cle_{\bar{\vp}}^{(+)}
  = \{\,u\psi_{-}u_1h\oplus 0 : h\in H^2\,\},
\]
and the corresponding coanalytic extremals for $D_\vp$ are precisely their
$C_u$--images, that is,
\[
\cle_\vp^{(-)} := C_u\big(\cle_{\bar{\vp}}^{(+)}\big)
             = \{\,C_u(u\psi_{-}u_1h\oplus 0): h\in H^2\,\}
             \subset H^2_{-}.
\]

\end{remark}
Now for any nonzero $h\in H^2$,
$$
C_u(u\psi_{-}u_1h)=u\bar{z}\overline{u\psi_{-}u_1h}=\bar{\psi}_{-}\bar{u}_1 V(h) \in H^2_{-}.
$$
Therefore, we get the following 
$$
\cle_\vp^{(-)}=\{0\oplus \bar{\psi}_{-}\bar{u}_1 g : g\in H^2_{-}\}.
$$

\subsection{Examples}

In this subsection, we illustrate Theorem~\ref{thm:analytic-Dphi} with two fully
explicit examples.  
The first one is genuinely nontrivial: the unimodular symbol $\vp$ is a
non-analytic quotient of Blaschke products, and the analytic extremal
subspace $\cle_\vp^{(+)}$ involves nontrivial inner factors.  
The second example is the simplest possible case ($u(z)=z$, $\vp=\bar{z}$),
which nevertheless illustrates the mechanism of the theorem very clearly. Subsequently, we discuss a few additional examples of norm attaining DTTOs.

\begin{example}\label{ex:nontrivial}
Choose distinct points $a,b\in\D$ and let
\[
B_a(z)=\frac{z-a}{1-\overline a z},
\qquad
B_b(z)=\frac{z-b}{1-\overline b z}
\]
be the corresponding Blaschke factors.  
Define the inner functions
\[
u(z)      = z, 
\qquad
\psi_{+}(z)= B_a(z),
\qquad
\chi_{+}(z)= z\,B_b(z).
\]

Set
\[
\vp(e^{it})
  := \frac{\chi_{+}(e^{it})}{u(e^{it})\psi_{+}(e^{it})}
   = \frac{z\,B_b(z)}{z\,B_a(z)}
   = \frac{B_b(z)}{B_a(z)},\qquad z=e^{it}.
\]
Since $B_a$ and $B_b$ are inner, their quotient is unimodular on $\T$. Thus
$\vp\in L^\infty$ and $\|\vp\|_\infty=1$.

By construction,
$$
\vp\,u\,\psi_{+}
  = \frac{B_b}{B_a}  z B_a
  = z B_b
  = \chi_{+}\in H^\infty.
$$
Therefore the factorization of $\vp$ in Theorem~\ref{thm:analytic-Dphi} holds, that is
$$
\vp =\bar{u}\bar{\psi}_{+} \chi_{+},
\qquad\text{where $\psi_{+}$, $\chi_{+}$ inner.}
$$

We compute
$$
\gcd(u,\chi_{+})
 = \gcd(z,\ zB_b) = z.
$$
Therefore, writing $u=du_1$ and $\chi_{+}=d\chi_{1}$, we obtain
$$
d(z)=z,\quad
u_1(z)=1,\quad
\chi_1(z)=B_b(z)
\quad \text{with }
\gcd(u_1,\chi_1)=1.
$$

By Theorem~\ref{thm:analytic-Dphi},
$$
\cle_\vp^{(+)}
   = u\,\psi_{+}\,u_1\,H^2
   = z\,B_a\,H^2.
$$
Thus every analytic extremal for $D_\vp$ has the form
$$
f = z B_a h \oplus 0,
\qquad h\in H^2,\ h\neq 0.
$$

This example exhibits norm attainment for a unimodular symbol
$\vp=\frac{B_b}{B_a}$, which is in general non-analytic.
\end{example}

\begin{example}\label{ex:trivial}
Let $u(z)=z\text{ and } \vp(z)=\bar{z}.$
Then $|\vp|=1$ a.e.\ on $\T$, and
\[\clk_u = H^2\ominus zH^2=\{\text{constants}\},
\qquad
\clk_u^\perp = zH^2\oplus H^2_{-}.\]

Compute
\[\vp\,u = \bar{z}  z = 1.\]
Take
\[\psi_{+}\equiv 1,\qquad \chi_{+}\equiv 1.\]
Thus
\[\vp\,u\,\psi_{+} = 1 = \chi_{+}\in H^\infty.\]

Since $\gcd(u,\chi_{+})=\gcd(z,1)=1$, we have
\[d=1,\qquad u_1=u=z,\qquad \chi_1=\chi_{+}=1.\]
Hence
\[\cle_\vp^{(+)}
 = u\psi_{+}u_1H^2
 = z^2H^2.\]

Choose $h(z)=1\in H^2$. Then
\[f_0 := u\psi_{+}u_1h\oplus 0
     = z^2\oplus 0.\]
Now
\[D_\vp f_0
 = P_{\clk_u^\perp}(\vp z^2\oplus 0)
 = P_{\clk_u^\perp}(\bar{z}z^2\oplus 0)
 = P_{\clk_u^\perp}(z\oplus 0)
 = z\oplus 0.\]
Hence
\[\|D_\vp f_0\|=1=\|f_0\|.\]
Thus $f_0=z^2\oplus 0$ is a norm attaining analytic extremal for $D_\vp$.

This elementary example illustrates the analytic part of the theorem in the
simplest possible nontrivial case.
\end{example}

\begin{example}\label{ex:NA-u=z}
Let $u(z)=z$. Then the corresponding model space is $\clk_u=\mathrm{span}\{1\}$ and $\clk_u^\perp= zH^2\oplus H^2_{-}=\{f\in L^2:\widehat f(0)=0\}$.
Let $\vp(e^{it})=e^{i\psi(t)},\, t\in[0,2\pi]$ be any unimodular symbol (for example, $\psi$ may be taken as continuous and nonconstant).
Since $|\vp|=1$ a.e. on $\T$, the multiplication operator $M_\vp$ is an isometry on $L^2$.
Choose $0\neq h\in L^2$ with
$$
\langle h,1\rangle=0 \quad\text{and}\quad \langle h,\bar{\vp}\rangle=0,
$$
which is possible because $\operatorname{span}\{1,\bar{\vp}\}$ is at most two-dimensional while $L^2$ is infinite-dimensional.
Then $h\in \clk_u^\perp$ and
$$
\langle M_\vp h,1\rangle=\langle h,\bar{\vp}\rangle=0,
$$
so $M_\vp h\in \clk_u^\perp$ as well.
Hence
$$
D_\vp h=(I-P_{\clk_u})M_\vp h=M_\vp h,
$$
and therefore,
$$
\|D_\vp h\|=\|M_\vp h\|=\|h\|.
$$
Thus $D_\vp$ attains its norm.

\end{example}

\begin{example}\label{ex:concrete-nonNA}
Let $u$ be any nonconstant inner function and set
$$
\vp (e^{it})  =  \frac{1+e^{it}}{2}\,.
$$
Then $\vp \in H^\infty$ with $\|\vp \|_\infty=1$, but 
$$
|\vp (e^{it})| = \big|\tfrac{1}{2}(1+e^{it})\big|
=
\big|\cos(t/2)\big| < 1 \quad\text{for a.e. } t\in [0,2\pi).
$$
Thus $|\vp |<1$ on a subset of $\T$ with positive measure.
We claim that $D_\vp $ is \emph{not} norm attaining on $\clk_u^\perp$.

\emph{Proof of the claim: }
Fix $0\neq h\in \clk_u^\perp= uH^2 \oplus H^2_{-}$. 
Since $|\vp |<1$ on a subset $E\subset\T$ of positive measure, we have
$$
\|M_\vp  h\|^2 
= \int_{\T} |\vp |^2\,|h|^2\,dm
< \int_{\T} |h|^2\,dm
= \|h\|^2.
$$
Therefore $\|M_\vp  h\|<\|h\|$ for every nonzero $h\in \clk_u^\perp$.
Since $D_\vp =(I-P_{\clk_u})M_\vp $, we get
$$
\|D_\vp  h\|  \le  \|M_\vp  h\|  <  \|h\|\qquad (h\neq 0).
$$
Hence there is no nonzero $h$ with $\|D_\vp  h\|=\|h\|$, that is, $D_\vp $ does \emph{not} attain its norm.
\end{example}

\section{A connection between norm attaining TO and DTTO}

In this section, we examine how norm attainment for Toeplitz operators influences the norm-attaining property of dual truncated Toeplitz operators.
\begin{theorem}\cite{Yoshino2002}\label{thm:Toeplitz-NA}
For $\vp\in L^\infty$, the following are equivalent:
\begin{enumerate}
\item $T_\vp$ is norm attaining on $H^2$;
\item there exist inner functions $\Theta_1,\Theta_2$ with no common inner factor such that
\[
\vp  =\|\vp\|_{\infty}\Theta_1\,\overline{\Theta_2} \quad \text{ a.e. on } \T.
\]
\end{enumerate}
In this case, there is a nonzero $f\in H^2$ with $H_\vp f=0$ (that is, $0$ is an eigenvalue of $H_\vp$) such that $\|T_\vp f\|=\|T_\vp\|\|f\|$.
\end{theorem}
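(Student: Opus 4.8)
The plan is to prove the two implications after the harmless normalization $\|\vp\|_\infty=1$ (the case $\vp\equiv 0$ being trivial), using throughout the standard Brown--Halmos identity $\|T_\vp\|=\|\vp\|_\infty$, so in particular $\|T_\vp\|=1$.

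For the direction $(2)\Rightarrow(1)$ I would simply exhibit an extremal vector. Given $\vp=\Theta_1\overline{\Theta_2}$ with $\Theta_1,\Theta_2$ inner, set $f:=\Theta_2\in H^2\setminus\{0\}$; then $\vp f=\Theta_1\overline{\Theta_2}\Theta_2=\Theta_1\in H^2$, so $H_\vp f=(I-P_{+})(\vp f)=0$, $T_\vp f=\Theta_1$, and $\|T_\vp f\|=1=\|f\|=\|T_\vp\|\,\|f\|$. The same vector witnesses the final ``moreover'' assertion. Note that the coprimality of $\Theta_1,\Theta_2$ plays no role in this direction; it is only needed to make the factorization in $(2)$ essentially unique.

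For $(1)\Rightarrow(2)$, start from a unit vector $f\in H^2$ with $\|T_\vp f\|=1$. The Pythagorean relation $\|T_\vp f\|^2=\|\vp f\|^2-\|H_\vp f\|^2$ together with $\|\vp f\|\le\|\vp\|_\infty\|f\|=1$ forces both $H_\vp f=0$ (hence $\vp f=T_\vp f\in H^2$) and $\|\vp f\|=\|f\|$, i.e.\ $\int_{\T}(1-|\vp|^2)|f|^2\,dm=0$; since the integrand is $\ge 0$, $|\vp|=1$ a.e.\ on $\{|f|>0\}$, and because a nonzero $H^2$ function has null zero set on $\T$ (\cite[Theorem 6.13]{Douglas1998}), $|\vp|=1$ a.e.\ on $\T$. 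Now set $g:=\vp f\in H^2$ and take inner--outer factorizations $f=\theta_f F$, $g=\theta_g G$. Since $|\vp|=1$ a.e., $|G|=|g|=|f|=|F|$ a.e.\ on $\T$, so by rigidity of outer functions with prescribed modulus (\cite[Corollary 6.23]{Douglas1998}) $G=cF$ for a unimodular constant $c$; dividing (legitimate since $f\neq 0$ a.e.\ on $\T$) gives $\vp=g/f=c\,\theta_g\overline{\theta_f}$ a.e. Finally, letting $d:=\gcd(\theta_f,\theta_g)$ and writing $\theta_f=d\Theta_2$, $\theta_g=d\Theta_1$ with $\gcd(\Theta_1,\Theta_2)=1$, the relation $|d|=1$ a.e.\ on $\T$ collapses this to $\vp=(c\Theta_1)\overline{\Theta_2}$; absorbing $c$ into $\Theta_1$ yields exactly $\vp=\|\vp\|_\infty\Theta_1\overline{\Theta_2}$ with coprime inner factors, which is $(2)$.

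The routine parts are the Pythagorean computation and the bookkeeping with the $\gcd$ of inner functions. The genuine content lies in two classical function-theoretic inputs: that a nonzero $H^2$-function cannot vanish on a positive-measure subset of $\T$ (used to upgrade componentwise unimodularity of $\vp$ on $\{|f|>0\}$ to global unimodularity), and rigidity of outer functions (used to recognize $\vp$ as a quotient of two inner functions). The step I expect to require the most care is this rigidity argument — converting the a.e.\ identity $|\vp f|=|f|$ with $\vp f\in H^2$ into the clean inner-quotient form $\vp=c\,\theta_g\overline{\theta_f}$ — together with the subsequent extraction of the common inner factor so that the factors produced in $(2)$ are genuinely coprime.
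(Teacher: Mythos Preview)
Your proof is correct. Note, however, that the paper does not supply its own proof of this statement: Theorem~\ref{thm:Toeplitz-NA} is stated with a citation to Yoshino~\cite{Yoshino2002} and is used as a black box in Section~4. So there is no in-paper argument to compare against. That said, your approach is very much in the spirit of the paper's own methods elsewhere: the key steps---upgrading $|\vp|=1$ on $\{|f|>0\}$ to global unimodularity via the F.\ and M.\ Riesz--type fact \cite[Theorem~6.13]{Douglas1998}, and invoking outer rigidity \cite[Corollary~6.23]{Douglas1998} to recognize $\vp$ as a ratio of inner functions---are exactly the tools the paper uses in Proposition~\ref{componentwise unimodular} and Lemma~\ref{lem:outer-test}. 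In effect you have reconstructed Yoshino's theorem by the same mechanism the authors later use for their DTTO analogue (Theorem~\ref{thm:analytic-Dphi}).
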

\begin{proposition}\label{thm:Tphi-implies-Dphi}
Let $\vp\in L^\infty$. 
If $T_\vp$ is norm attaining, then $D_\vp$ is norm attaining.
\end{proposition}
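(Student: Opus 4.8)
The plan is to leverage the elementary norm comparison recorded in Section~2 right after \eqref{eq:Dtphi-explicit}: for every $g\in H^2$ one has $\|T_\vp g\|^2\le\|D_\vp(ug)\|^2$, where $ug\in uH^2\subseteq\clk_u^\perp$. Combined with the identities $\|D_\vp\|=\|\vp\|_\infty$ (from the Proposition of \cite{Sang:Ding}) and $\|T_\vp\|=\|\vp\|_\infty$ (Brown--Halmos), this comparison will force an extremal vector of $T_\vp$, embedded into $uH^2$ via multiplication by $u$, to be an extremal vector of $D_\vp$. In particular, one does \emph{not} need the fine structure of Yoshino's factorization (Theorem~\ref{thm:Toeplitz-NA}); only the definition of norm attainment for $T_\vp$ enters.

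In detail: first I would use the hypothesis $T_\vp\in\na$ to fix a unit vector $f\in H^2$ with $\|T_\vp f\|=\|T_\vp\|=\|\vp\|_\infty$. Next I would set $h_0:=uf\in uH^2\subseteq\clk_u^\perp$ and note $\|h_0\|=\|f\|=1$ since $u$ is inner. Applying the comparison with $g=f$ gives
\[
\|D_\vp h_0\|^2=\|D_\vp(uf)\|^2\ge\|T_\vp f\|^2=\|\vp\|_\infty^2,
\]
while on the other hand
\[
\|D_\vp h_0\|\le\|D_\vp\|\,\|h_0\|=\|\vp\|_\infty .
\]
The two displays together yield $\|D_\vp h_0\|=\|\vp\|_\infty=\|D_\vp\|=\|D_\vp\|\,\|h_0\|$, so $h_0=uf$ is an extremal vector for $D_\vp$, and hence $D_\vp$ is norm attaining.

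I expect essentially no obstacle here; the only point deserving a line of justification is the inequality $\|T_\vp g\|^2\le\|D_\vp(ug)\|^2$ itself. It follows from the orthogonal decomposition $D_\vp(ug)=(I-P_{+})(\vp u g)\oplus M_uP_{+}M_{\bar u}(\vp u g)$ given by \eqref{eq:Dtphi-explicit}, discarding the $H^2_{-}$ summand, using that $M_u$ is unitary on $L^2$ (as $|u|=1$ a.e.\ on $\T$), and the computation $P_{+}M_{\bar u}(\vp u g)=P_{+}(\vp g)=T_\vp g$. I would also add a brief remark that the implication is one-directional: $D_\vp$ can be norm attaining while $T_\vp$ is not, as Example~\ref{ex:NA-u=z} shows for $u(z)=z$ and a generic unimodular symbol, consistent with the strictly richer analytic/coanalytic extremal structure of $D_\vp$ described in Theorems~\ref{thm:analytic-Dphi} and \ref{cor:coanalytic-Cu}.
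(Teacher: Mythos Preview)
Your proof is correct and takes a genuinely different route from the paper's.

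The paper invokes Yoshino's factorization (Theorem~\ref{thm:Toeplitz-NA}) to write $\vp=\|\vp\|_\infty\,\Theta_1\overline{\Theta_2}$ with $\Theta_1,\Theta_2$ inner, and then verifies directly that $M_\vp(u\Theta_2 f)=\|\vp\|_\infty\,u\Theta_1 f\in uH^2\subset\clk_u^\perp$ for every $f\in H^2$, so no norm is lost under $P_{\clk_u^\perp}$ and every nonzero $u\Theta_2 f$ is extremal for $D_\vp$. Your argument instead bypasses Yoshino entirely: you use only the elementary comparison $\|T_\vp g\|\le\|D_\vp(ug)\|$ recorded right after \eqref{eq:Dtphi-explicit}, together with $\|T_\vp\|=\|D_\vp\|=\|\vp\|_\infty$, to squeeze an extremal vector $f$ of $T_\vp$ into an extremal vector $uf$ of $D_\vp$. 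This is shorter and more self-contained. What the paper's approach buys is an explicit Beurling-type description of an entire subspace $u\Theta_2 H^2$ of analytic extremals for $D_\vp$; your method exhibits extremals one at a time, though of course the two descriptions agree once one notes (via Yoshino) that the extremal set of $T_\vp$ is precisely $\Theta_2 H^2$, so that your map $f\mapsto uf$ recovers the same subspace.
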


\begin{proof}
Assume that $T_\vp$ is norm attaining.
By Theorem~\ref{thm:Toeplitz-NA}, we have $\vp=\|\vp\|_{\infty}\Theta_1\,\overline{\Theta_2}$ for some inner $\Theta_1,\Theta_2$.
Then for any $f\in H^2$,
\[
M_\vp(u\Theta_2 f)=\vp\,u\Theta_2 f=\|\vp\|_{\infty}\Theta_1 u f \in uH^2\subset \clk_u^\perp.
\]
This gives us
\begin{align*}
    \|D_\vp( u\Theta_2 f)\|&=\|(I-P_{\clk_u})M_\vp(u\Theta_2 f)\|\\
    &=\|M_\vp(u\Theta_2 f)\|\\
    &=\|\|\vp\|_\infty\Theta_1 uf\|\\
    &=\|\vp\|_\infty \|u\Theta_2f\|.
\end{align*}

Choosing $f\neq 0$, we get an extremal vector. Therefore, $D_\vp$ is norm attaining.
\end{proof}

\begin{remark}\label{TO DTTO remark}
    Suppose $\vp \in H^\infty$, and $D_\vp$ is norm attaining in $uH^2$. Then we have $\cld_\vp=\begin{bmatrix}
        T_\vp & H^*_{u\bar{\vp}}\\
        0 & S_\vp
    \end{bmatrix}$ and there exits a nonzero $h\in H^2$ such that $\|D_\vp (uh\oplus 0)\|=\|\vp\|_\infty\|uh\|.$ Using the unitary equivalence between $D_\vp$ and $\cld_\vp$, we obtain
    \begin{align*}
        \|U\cld_\vp U^* (uh\oplus 0)\|&=\|\vp\|_\infty\|uh\|\\
        \|\cld_\vp (h\oplus 0)\|&=\|\vp\|_\infty\|h\|\\
        \|T_\vp h\|&=\|\vp\|_\infty\|h\|.
    \end{align*}
Thus $T_\vp$ is norm attaining.

Similarly, if $\bar{\vp}\in H^\infty$ and $D_\vp$ is norm attaining in $H^2_{-}$, then $S_\vp$ is norm attaining. Consequently, $T_{\bar{\vp}}$ is norm attaining, and hence $T_\vp$ attains its norm.
\end{remark}

Combining Proposition \ref{thm:Tphi-implies-Dphi} and Remark \ref{TO DTTO remark}, we get the following corollary.
\begin{corollary}
Let $\varphi \in L^\infty$. Then 
\begin{enumerate}
    \item If $\vp \in H^\infty$, then $D_\vp$ is norm attaining in $uH^2$ if and only if $T_\vp$ is norm attaining;
    \item  If $\bar{\vp} \in H^\infty$, then $D_\vp$ is norm attaining in $H^2_{-}$ if and only if $T_\vp$ is norm attaining.
\end{enumerate}    
\end{corollary}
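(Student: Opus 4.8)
The plan is to assemble the two halves already isolated in Proposition~\ref{thm:Tphi-implies-Dphi} and Remark~\ref{TO DTTO remark}, paying attention to which summand the extremal vectors occupy. For part~(1), the implication ``$T_\vp$ norm attaining $\Rightarrow$ $D_\vp$ norm attaining on $uH^2$'' is read directly off the proof of Proposition~\ref{thm:Tphi-implies-Dphi}: writing $\vp=\|\vp\|_\infty\Theta_1\overline{\Theta_2}$ via Theorem~\ref{thm:Toeplitz-NA}, the extremal vector $u\Theta_2 f$ constructed there already lies in $uH^2$, since $M_\vp(u\Theta_2 f)=\|\vp\|_\infty\Theta_1 uf\in uH^2\subset\clk_u^\perp$; thus $D_\vp$ attains its norm at a vector in the analytic component, and this direction does not even require $\vp\in H^\infty$. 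For the converse I would use the hypothesis $\vp\in H^\infty$ to note $u\vp\in H^\infty$, hence $H_{u\vp}=(I-P_{+})M_{u\vp}|_{H^2}=0$, so by Lemma~\ref{block rptn} the block model collapses to the upper-triangular form $\cld_\vp=\begin{bmatrix} T_\vp & H^*_{u\bar{\vp}} \\ 0 & S_\vp \end{bmatrix}$. If $D_\vp$ has an extremal vector $uh\oplus 0\in uH^2$, then under the unitary $U$ of \eqref{unitary:equiv} it corresponds to $h\oplus 0$, on which $\cld_\vp$ acts as $h\oplus 0\mapsto T_\vp h\oplus 0$, so $\|T_\vp h\|=\|\cld_\vp(h\oplus 0)\|=\|D_\vp\|\,\|h\|=\|\vp\|_\infty\|h\|$ and $T_\vp$ is norm attaining; this is precisely the computation recorded in Remark~\ref{TO DTTO remark}.

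For part~(2), I would transfer everything to the symbol $\bar{\vp}\in H^\infty$ through the canonical conjugation $C_u$. Since $C_u$ is an isometric involution with $C_u(H^2_-)=uH^2$ by \eqref{eq:Cu-swap-cor}, since $D_\vp^*=D_{\bar{\vp}}=C_uD_\vp C_u$ by \eqref{eq:Cu-symm-cor}, and since $\|D_\vp\|=\|D_{\bar{\vp}}\|=\|\vp\|_\infty$, a vector $0\neq g\in H^2_-$ is extremal for $D_\vp$ if and only if $C_u g\in uH^2$ is extremal for $D_{\bar{\vp}}$. Hence $D_\vp$ is norm attaining on $H^2_-$ if and only if $D_{\bar{\vp}}$ is norm attaining on $uH^2$, which by part~(1), applicable because $\bar{\vp}\in H^\infty$, is equivalent to $T_{\bar{\vp}}$ being norm attaining; finally $T_{\bar{\vp}}=T_\vp^*$, so Theorem~\ref{thm:NA_equiv} identifies this with $T_\vp$ being norm attaining.

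I do not expect a genuine obstacle here: the substantive work has already been done in the preceding Proposition and Remark, so the corollary amounts to bookkeeping --- tracking the supporting subspace of the extremal vector and invoking $C_u$-symmetry. The one point deserving a careful word is the vanishing $H_{u\vp}=0$ (and $H_{u\bar{\vp}}=0$ in part~(2)): this is exactly where the analyticity hypothesis $\vp\in H^\infty$ (resp. $\bar{\vp}\in H^\infty$) enters, it is what renders the block matrix triangular, and it is what permits the clean passage from $D_\vp$ on $uH^2$ to $T_\vp$ on $H^2$. Without it the off-diagonal Hankel entry would couple the analytic and coanalytic components and the equivalence could fail.
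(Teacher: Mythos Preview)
Your proposal is correct and, for part~(1), follows the paper verbatim: the forward implication is read off Proposition~\ref{thm:Tphi-implies-Dphi} (the extremal $u\Theta_2 f$ already lies in $uH^2$), and the converse is exactly the block-matrix computation of Remark~\ref{TO DTTO remark}. For part~(2) there is a minor but genuine difference of route: the paper's Remark~\ref{TO DTTO remark} argues directly with the block model again --- when $\bar\vp\in H^\infty$ the entry $H^*_{u\bar\vp}$ vanishes, so $\cld_\vp$ is lower triangular, an extremal $0\oplus g$ yields $\|S_\vp g\|=\|\vp\|_\infty\|g\|$, and then the anti-unitary $V$ (via $VT_\vp=S_{\bar\vp}V$) transfers this to $T_{\bar\vp}$. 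You instead invoke the $C_u$-symmetry \eqref{eq:Cu-symm-cor} to reduce part~(2) wholesale to part~(1) applied to $\bar\vp$. Both arguments are short and rest on the same ingredients (triangularity from analyticity, plus a symmetry swapping $\vp\leftrightarrow\bar\vp$ and $uH^2\leftrightarrow H^2_-$); your $C_u$ route has the modest advantage of handling both directions of part~(2) at once and making the forward implication of part~(2) explicit, whereas the paper leaves that to the reader.
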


\begin{center}
\textbf{Acknowledgment}
\end{center}

The authors sincerely thanks Professor G. Ramesh (IIT Hyderabad) for his careful reading of the manuscript and for several valuable suggestions that improved the presentation.

\end{document}